\DeclareFontFamily{OT1}{rsfs}{}
\DeclareFontShape{OT1}{rsfs}{n}{it}{<-> rsfs10}{}
\DeclareMathAlphabet{\curly}{OT1}{rsfs}{n}{it}
\renewcommand\H{\curly H}
\newcommand\I{\curly I}
\renewcommand\O{\mathcal O}
\newcommand\PP{\mathbb P}
\newcommand\C{\mathbb C}
\newcommand\R{\mathbb R}
\newcommand\Z{\mathbb Z}
\newcommand{\Rt}[1]{\stackrel{#1\,}{\longrightarrow}}
\newcommand\To{\longrightarrow}
\newcommand\into{\hookrightarrow}
\newcommand\Into{\ar@{^{ (}->}[r]}
\newfont{\bigtimesfont}{cmsy10 scaled \magstep5}
\newcommand{\bigtimes}{\mathop{\lower0.9ex\hbox{\bigtimesfont\symbol2}}}
\renewcommand\ker{\operatorname{ker}}
\newcommand\Bl{\operatorname{Bl}}
\newcommand\Hom{\operatorname{Hom}}
\renewcommand\hom{\curly H\!om}
\newcommand\Ext{\operatorname{Ext}}
\newcommand\ext{\curly Ext}
\newcommand\beq[1]{\begin{equation}\label{#1}}
\newcommand\eeq{\end{equation}}
\makeatletter \@addtoreset{equation}{section} \makeatother
\newtheorem{thm}[equation]{Theorem}
\newtheorem{lem}[equation]{Lemma}
\newtheorem{cor}[equation]{Corollary}
\newtheorem{prop}[equation]{Proposition}
\newenvironment{rmk}{\noindent\textbf{Remark}.}{\\}
\newenvironment{rmks}{\noindent\textbf{Remarks}.}{\\}
\newtheorem{quest}{Question}
\newtheoremstyle{citing}
  {}
  {}
  {\itshape}
  {}
  {\bfseries}
  {}
  {.5em}
  {\thmnote{#3}}
\theoremstyle{citing}
\newtheorem*{custom}{}}
\newcommand{\f}[1]{\Omega_Y^{n-1}({#1}Q)}
\newcommand{\isom}{\cong}
\DeclareMathOperator{\tensor}{\otimes}
\newcommand{\restr}[1]{{\raisebox{-0.3\height}{$\mid_{#1}$}}}
\newcommand{\binomial}[2]{\begin{pmatrix}
                           {#1}\\{#2}
                          \end{pmatrix}
}
\title{Smoothing nodal Calabi-Yau $n$-folds}
\author[S. Rollenske and R. P. Thomas]{S\"onke Rollenske and Richard Thomas}
\subjclass[2000]{  14J32; (32G05, 32J18, 14D15)}
\begin{document}
\begin{abstract} 
Let $X$ be an $n$-dimensional Calabi-Yau with ordinary double points, where
$n$ is odd.
Friedman showed that for $n=3$ the existence of a smoothing of $X$ implies
a specific type of relation between homology classes
on a resolution of $X$. (The converse is also true, due to work of Friedman,
Kawamata and Tian.)

We sketch a more topological proof of this result, and then extend it to
higher dimensions. For $n>3$ the result is nonlinear; the ``Yukawa product" on the middle dimensional (co)homology plays an unexpected role. We also discuss a converse, proving it for nodal Calabi-Yau hypersurfaces of $\PP^{n+1}$.
\end{abstract}
\maketitle


\section{Introduction}
Fix a Calabi-Yau $n$-fold $X$; for simplicity in this introduction this 
will mean a projective variety with at worst ordinary double point (ODP,
or nodal) singularities and trivial canonical bundle $K_X\cong\O_X$.
Sometimes we will pick a trivialisation
$\Omega\in H^0(K_X)$ of $K_X$ -- a ``complex volume form".

\subsection*{Smooth Calabi-Yaus}
We recall that when $X$ is smooth and three dimensional,
first order deformations of $(X,\Omega)$ are given by its third homology
$H_3(X,\R)$. The form $\Omega$ gives an isomorphism between the first order
deformations $H^1(T_X)$ of $X$ and $H^1(\Omega^2_X)=H^{2,1}(X)$, while the
choice of $\Omega$ adds $H^{3,0}$ to the deformation space. The isomorphism
to $H_3(X,\R)$ is then given by the composition of the natural maps
\beq{H3}
H_3(X,\R)\Rt{\mathrm{PD}} H^3(X,\R) \into H^3(X,\C)\Rt{\mathrm{pr}}H^{3,0}(X)\oplus
H^{2,1}(X).
\eeq

Equivalently, given a 3-cycle on $(X,\Omega)$ we take the $(3,0)+(2,1)$ part
of its Poincar\'e dual
and add this to the period point $[\Omega]\in H^3(X,\C)$ to give the corresponding
first order deformation;
there will then be a unique first order deformation of the complex structure
for which the new class $[\Omega]$ is represented by a holomorphic form of
type $(3,0)$.

More generally deformations of smooth Calabi-Yau $n$-folds are governed by
the $H^{n,0}\oplus H^{n-1,1}$ part of their middle degree (co)homology. Therefore
the Zariski
tangent space to the moduli space of Calabi-Yaus has constant dimension,
showing (by $T^1$-lifting \cite{Ra, Ka}) that the moduli space is smooth
(unobstructed); this is the Bogomolov-Tian-Todorov theorem.

\subsection*{Nodal Calabi-Yaus 3-folds}
Friedman's theorem \cite{Fr} can be seen as an extension of this theory to
nodal Calabi-Yau 3-folds. That
is, fixing a Calabi-Yau 3-fold $X$ with at worst ODPs, one can interpret
his result as saying that first order deformations of $(X,\Omega)$ are still
isomorphic to $H_3(X,\R)$. (Kawamata \cite{Ka} and Tian \cite{Ti} later showed
-- in any dimension $n$ -- that again all first order deformations are unobstructed
and the moduli space is smooth.)

Friedman stated his theorem as follows. There exist \emph{small resolutions}
of $X$ given by replacing each ODP $p_i$ with a smooth rational curve $C_i$
with normal bundle $\O_{\PP^1}(-1)\oplus\O_{\PP^1}(-1)$. Different small
resolutions differ by flops of the curves $C_i$ and need not be projective
or even K\"ahler; we fix one $X^+$\!.

\begin{thm} \cite{Fr} \label{Fr1}
$X$ admits a first order smoothing if and only if there is a relation
$$
\sum\delta_i[C_i]\ =\ 0 \quad\text{in}\ H_2(X^+\!,\R)
$$
with each $\delta_i\ne0$.
\end{thm}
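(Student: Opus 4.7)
The plan is to reinterpret both sides of the equivalence topologically via the long exact homology sequence of the pair $(X^+,\bigsqcup_iC_i)$, and to match them by extending the smooth identification \eqref{H3} to the nodal setting.

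On the topological side, the contraction $\pi:X^+\to X$ collapses each $C_i\cong\PP^1$ to the node $p_i$, so by excision $H_k(X^+,\bigsqcup_iC_i;\R)\cong H_k(X,\R)$ for $k\ge1$. Since $H_{\mathrm{odd}}(\bigsqcup_iC_i)=0$, the long exact sequence of the pair collapses to
\[
0\to H_3(X^+,\R)\to H_3(X,\R)\Rt{\partial}\bigoplus_{i=1}^N\R\cdot[C_i]\Rt{j_*}H_2(X^+,\R)\to H_2(X,\R)\to 0.
\]
A relation $\sum_i\delta_i[C_i]=0$ in $H_2(X^+,\R)$ with every $\delta_i\ne0$ thus corresponds precisely to a class $\sigma\in H_3(X,\R)$ whose connecting image $\partial\sigma=\sum_i\delta_i[C_i]$ has a non-zero component at every node.

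On the deformation side, I would extend \eqref{H3} to the nodal case, so that first-order deformations of $(X,\Omega)$ are still classified by $H_3(X,\R)$ with the local smoothing parameter at $p_i$ equal to the period of the deformed $\Omega$ over a small 3-sphere linking $p_i$ -- equivalently, the $i$-th component of $\partial\sigma$. Then $\sigma$ is a first-order smoothing (all nodes smooth simultaneously) iff every $\delta_i\ne0$, and combining with the topological step proves the theorem.

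The main obstacle is this nodal analogue of \eqref{H3}. For smooth CY 3-folds it rests on $H^1(T_X)\cong H^{2,1}$ plus the $\Omega$-direction $H^{3,0}$; for nodal $X$ one must match the algebraic deformation exact sequence
\[
0\to H^1(T_X)\to T^1_X\to\bigoplus_i\C\cdot\delta_i\to H^2(T_X)
\]
with the topological sequence above. Using $T_X\cong\Omega^2_X$ (from $K_X\cong\O_X$) and $R\pi_*\Omega^2_{X^+}=\Omega^{[2]}_X$ for the small $\pi$, Serre duality identifies the obstruction map $\bigoplus_i\C\cdot\delta_i\to H^2(T_X)$ as dual to $j_*\colon\bigoplus_i\R[C_i]\to H_2(X^+,\R)$, so an obstruction vanishes iff the corresponding relation holds. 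Some care is needed because $X^+$ need not be K\"ahler, so one invokes Poincar\'e duality on the smooth oriented 6-manifold rather than a full Hodge decomposition.
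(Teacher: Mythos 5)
Your topological reformulation via the long exact sequence of the pair $(X^+,\bigsqcup_iC_i)$ is fine and matches the heuristic picture in the introduction. The gap is in the deformation-theoretic half: the sentence ``Serre duality identifies the obstruction map $\bigoplus_i\C\to H^2(T_X)$ as dual to $j_*$'' is the entire content of the theorem, and it is asserted rather than proved. Two things are missing. First, even granting $R\pi_*\Omega^2_{X^+}\cong T_X$ (which itself needs the rigidity of the $(-1,-1)$-curves, and note that $T_X\cong\Omega^{[2]}_X$, the reflexive hull, not $\Omega^2_X$), Serre duality only gives $H^2(T_X)^*\cong H^1(\Omega^1_{X^+})$; identifying the transpose of the connecting map of \eqref{loctoglob} with $\alpha\mapsto\int_{C_i}\alpha$ requires tracing the local generator of $\ext^1(\Omega_X,\O_X)$ from Lemma \ref{ext} through the local-to-global spectral sequence, which is exactly what Lemma \ref{piO}, Proposition \ref{ott} and the residue sequence \eqref{exres} accomplish. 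Second, and more seriously, your conclusion needs the relations among the $[C_i]$ detected by the coherent group $H^1(\Omega^1_{X^+})$ to coincide with those detected by $H^2(X^+,\R)$. Poincar\'e duality on the oriented $6$-manifold relates $H^4(X^+,\R)$ to $H_2(X^+,\R)$, but it says nothing about the comparison map between Dolbeault and de Rham cohomology; that comparison (injectivity of $H^{p,q}_{\bar\partial}\to H^{p+q}_{dR}$, degeneration of Fr\"olicher) is a Hodge-theoretic statement that fails on general non-K\"ahler manifolds. Since $X^+$ need not be K\"ahler, you would have to invoke that it lies in Fujiki's class $\mathcal C$ and satisfies the $\partial\bar\partial$-lemma -- precisely the ``full Hodge decomposition'' you propose to do without.

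The paper circumvents both problems by working on the blow-up $Y$, which is assumed K\"ahler: Proposition \ref{ott} gives $\Ext^1(\Omega_X,\O_X)\cong H^1(\Omega^2_Y(\log Q))$, the residue sequence \eqref{exres} identifies the obstruction map of \eqref{loctoglob} with the Gysin map $H^{1,1}(Q)\to H^{2,2}(Y)$ modulo the $A_i+B_i$ classes (which inject, as their intersection with $[Q_i]$ is $-2$), and Hodge theory on $Y$ converts vanishing in $H^{2,2}(Y)$ into vanishing in $H_2(Y,\R)$; the statement on $X^+$ then follows because $A_i-B_i$ corresponds to $[C_i]$ under the blow-down $Y\to X^+$. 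To salvage your route on $X^+$ directly you must either import the $\partial\bar\partial$-lemma for class $\mathcal C$, or pass to $Y$ as the paper does; either way the identification of the obstruction map with a residue/Gysin map still has to be carried out.
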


Since small resolutions are special to three dimensions we rephrase this
in terms of the blow up $Y$ of $X$ at the ODPs $p_i$. Each exceptional divisor
$Q_i$ is a 2-dimensional quadric $\PP^1\times\PP^1$ (the small resolutions
come from blowing down one of the two rulings). Let $A_i$ and $B_i$ denote
the homology classes of $\PP^1\times\{\mathrm{pt}\}$ and $\{\mathrm{pt}\}\times\PP^1$.
Combined with the unobstructedness results mentioned above, Friedman's theorem becomes the following.

\begin{thm} \label{Fr2}
$X$ admits a smoothing if and only if there is a relation
$$
\sum\delta_i(A_i-B_i)\ =\ 0 \quad\text{in}\ H_2(Y,\R)
$$
with each $\delta_i\ne0$.
\end{thm}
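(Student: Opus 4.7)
The plan is to deduce Theorem~\ref{Fr2} from Friedman's Theorem~\ref{Fr1} together with the Kawamata-Tian unobstructedness theorem. By these, $X$ admits a smoothing if and only if there is a relation $\sum\delta_i[C_i]=0$ in $H_2(X^+\!,\R)$ with each $\delta_i\neq0$, so it suffices to translate this into the relation $\sum\delta_i(A_i-B_i)=0$ in $H_2(Y,\R)$ (with the same coefficients).

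Let $\pi\colon Y\to X^+$ be the blow-down contracting the $A_i$-rulings of each quadric $Q_i$, so that $\pi_\ast A_i=0$ while $\pi_\ast B_i=[C_i]$ (the section of $Q_i\to C_i$ mapping isomorphically to $C_i$). The easy direction is immediate: $\pi_\ast\!\sum\delta_i(A_i-B_i)=-\sum\delta_i[C_i]$, which vanishes if the original relation does. For the converse I would use the fact that $\pi$ is the blow-up of the smooth $3$-fold $X^+$ along the disjoint smooth rational curves $C_i$; the standard blow-up formula gives $\dim H_2(Y,\R)=\dim H_2(X^+\!,\R)+n$ (where $n$ is the number of nodes), and since $\pi_\ast$ is surjective with the $n$ classes $[A_i]$ linearly independent in its kernel, the kernel $\ker\pi_\ast$ is freely spanned by the $[A_i]$. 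A relation $\sum\delta_i[C_i]=0$ therefore lifts to $\sum\delta_i B_i=\sum_j\lambda_j A_j$ in $H_2(Y,\R)$ for some $\lambda_j\in\R$.

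To pin down $\lambda_j=\delta_j$, I would intersect both sides with the exceptional divisor class $[Q_k]$. Projectivising $\mathcal N_{C_k/X^+}=\O(-1)\oplus\O(-1)$ gives $\mathcal N_{Q_k/Y}=\O_{\PP^1\times\PP^1}(-1,-1)$, which yields the key \emph{symmetric} intersection numbers $[Q_k]\cdot A_i=[Q_k]\cdot B_i=-\delta_{ik}$ (Kronecker delta), and these substitute directly to give $-\delta_k=-\lambda_k$. The main point of the argument is precisely this symmetry $[Q_k]\cdot A_k=[Q_k]\cdot B_k$, which forces the combination $A_i-B_i$ to appear; had the two rulings intersected $Q_k$ differently one would instead obtain a relation of the form $\sum\delta_i(A_i-cB_i)=0$ with $c\neq1$. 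The remaining content is bookkeeping from standard blow-up theory combined with the Friedman-Kawamata-Tian input.
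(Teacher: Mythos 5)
Your argument is correct, but it takes a genuinely different route from the paper's own proof. You treat Theorem~\ref{Fr2} as a purely topological reformulation of Friedman's Theorem~\ref{Fr1}: you import that theorem (plus Kawamata--Tian unobstructedness) as a black box and carry out the translation between $H_2(X^+\!,\R)$ and $H_2(Y,\R)$, using the blow-up formula for the contraction $Y\to X^+$ to identify the kernel of pushforward with the span of the fibre classes $A_i$, and the symmetric intersection numbers $[Q_k]\cdot A_i=[Q_k]\cdot B_i=-\delta_{ik}$ (coming from $\O_{Q_k}(Q_k)=\O(-1,-1)$) to pin the lift down to $\sum\delta_i(A_i-B_i)=0$. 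All of these steps check out, and they make explicit a translation that the introduction of the paper only asserts in passing. The paper's actual proof in Section~\ref{3} is different in kind: it does not invoke Theorem~\ref{Fr1} at all, but reproves it by identifying $\Ext^1(\Omega_X,\O_X)$ with $H^1(\Omega^2_Y(\log Q))$ via Proposition~\ref{ott}, running the residue exact sequence $0\to\Omega^2_Y\to\Omega^2_Y(\log Q)\to\Omega^1_Q\to0$, matching the resulting long exact sequence with the local-to-global sequence \eqref{loctoglob}, and reading off that a first order smoothing exists if and only if $\sum\delta_i(A_i-B_i)$ vanishes in $H^{2,2}(Y)$. What your approach buys is brevity and an explicit dictionary between the two formulations; what it costs is that it is parasitic on Friedman's original proof and on the existence of small resolutions, which is special to threefolds, so it gives no hint of how to proceed when $n>3$ --- which is precisely the point of the paper's self-contained argument. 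One small point of hygiene: the paper reserves $\pi$ for the blow-down $Y\to X$, so you should give your contraction $Y\to X^+$ a different name.
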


(Notice that the classes $A_i$ and $B_i$ are nonzero in $H_2(Y)$, since 
each has intersection $-1$ with $[Q_i]$.)

By some elementary topology (excision, long exact sequence of a pair) we
can interpret this as follows. The relation in homology is a 3-chain $\overline{\Delta}$
whose boundary may be taken to be a union of $S^2$s -- multiples $\delta_i$
of the $C_i$
in $X^+$\! \eqref{Fr1}, or of the anti-diagonals in $Q_i\cong\PP^1\times\PP^1$
in $Y$ \eqref{Fr2}. On blowing down to $X$ the $S^2$s are collapsed to a
point and $\overline{\Delta}$ projects to a 3-cycle $\Delta$. This lifts
on a topological model $\widetilde X$ of the smoothing to a cycle $\widetilde{\Delta}$
whose intersection with the vanishing cycle at $p_i$ is also $\delta_i$ (see
\cite{STY} for example).
(There is a sign issue here; changing the orientation on the vanishing cycle
corresponds to flopping the curve $C_i\subset X^+\!$ or swapping the cycles
$A_i$ and $B_i$ in $Y$ \cite{STY}.)

On this model $\widetilde X$ the holomorphic $3$-form of $X$ pulls back to
a $3$-form which is degenerate precisely along the vanishing cycles. Adding small multiples of the 3-cycle $\widetilde\Delta$ to the
$3$-form (in the way described in \eqref{H3}) breaks this degeneracy because
its pairing with the vanishing cycle at $p_i$ is $\delta_i\ne0$. (We think
of adding $\widetilde\Delta$ to the $(3,0$) form as ``inflating" the vanishing
cycle.) The result is a path of cohomology classes on $\widetilde X$ beginning at the pullback of the original $(3,0)$-form. There is a corresponding unique path of complex structures on $\widetilde X$ for which these classes are of type $(3,0)$.

Thus we see that deformations of $X$ again correspond to $H_3(X,\R)$. A given
class $[\Delta]\in H_3(X,\R)$ corresponds to a deformation which
smooths the ODP $p_i$ if and only if the intersection of $[\Delta]$ and $p_i$
is nonzero. Here we are defining the intersection number $\delta_i$ of $[\Delta]$
and $p_i$ by any of the following equivalent prescriptions.
\begin{itemize} \item
Lift $\Delta$ to a 3-cycle $\widetilde{\Delta}$
on a topological model $\widetilde X$ of the smoothing and intersect with
the vanishing cycle at $p_i$,
\item Lift $\Delta$ to a chain $\overline{\Delta}$ on a small resolution
$X^+$\!, then $\partial\overline{\Delta}$ contains a multiple $\delta_i$ of
the exceptional curve $[C_i]$,
\item Lift $\Delta$ to a chain $\overline{\Delta}$ on the blowup $Y$, then
$\partial\overline{\Delta}$ contains a multiple $\delta_i$ of the cycle $A_i-B_i$
on $Q_i$.
\end{itemize}

In Section \ref{3} we sketch a short proof of Friedman's theorem along such
topological lines, using logarithmic forms and residues to play the role
of chains and boundaries (to which they are Poincar\'e dual).

\subsection*{Nodal Calabi-Yau $n$-folds}
Let $X$ be an odd dimensional Calabi-Yau $n$-fold with ODPs $p_i\in X$.
Let $Y$ denote the blow up of $X$ in the $p_i$, with exceptional divisors
$Q_i$. These are $(n-1)$-dimensional quadrics with standard $(n-1)/2$-dimensional
planes whose homology classes we denote $A_i,B_i$; see \eqref{AB}.

Extending Friedman's results to higher dimensions is complicated by the
other Hodge summands in the middle degree (co)homology. For instance the
$A_i-B_i$ classes on $Y$ are of pure Hodge type $(\frac{n+1}2,\frac{n+1}2)$.
For a linear combination of them to be zero in $H^{n+1}(Y)$ as in Theorem
\ref{Fr2} would require it to be $\bar\partial$ of a form of type $(\frac{n+1}2,
\frac{n-1}2)$. Only in $n=3$ dimensions is this a form of type $(n-1,1)$
(i.e. a form of the type that controls deformations of complex structure).
So the relation must be more complicated when $n>3$.

The best way of explaining the difference between three and higher
dimensions is as follows. The local model of the smoothing
\[
X_{\epsilon}=\left\{f_\epsilon:=\sum_{i=1}^{n+1}x_i^2\,-\epsilon=0\right\}\subset\C^{n+1}
\]
has the natural parameter (or modulus) $\epsilon$. In fact this generates
the sheaf of local smoothings $\ext^1(\Omega_{X_0},\O_{X_0})$;
see Lemma \ref{ext}.

However there is another natural
modulus given by the pairing of the holomorphic $n$-form with the vanishing
cycle (the ``complex volume" of the vanishing cycle). A local model of the$(n,0)$-form
is given by
\[
\Omega_{f_\epsilon}:=\frac{dx_1\ldots dx_n}{\partial f_\epsilon/\partial
x_{n+1}}=\frac{dx_1\ldots dx_n}{2x_{n+1}}
\]
on $X_\epsilon$. (This is the Poincar\'e residue of the form $dx_1\ldots
dx_{n+1}/f_\epsilon$ on $\C^{n+1}$ with first order poles along $X_\epsilon$.)

The vanishing cycle $L_{\epsilon}$ is the $n$-sphere real slice of $X_\epsilon$
given by
taking those points with $x_i\in\sqrt\epsilon\,\R$ for all $i$. The integral
of $\Omega_{f_\epsilon}$ over this can be computed \cite{Ott} to be
\beq{integ}
\int_{L_\epsilon}\Omega_{f_\epsilon}=O\big(\epsilon^{\frac{n-1}2}\big).
\eeq
(In even dimensions the vanishing cycle has monodromy $[L_\epsilon]\mapsto
-[L_\epsilon]$ as $\epsilon$ circles
the origin, which explains the ambiguity in the square root.) 
Therefore the two moduli are the same only in 3 dimensions; in general
one has to \emph{take the $(n-1)/2$\,th power of the complex structure smoothing
modulus to get the modulus related to homology}. Another way of saying this
is that the homological modulus does not correspond to a first order deformation
of complex structure (except in dimension 3) but an $(n-1)/2$\,th order deformation.

Therefore, working entirely algebraically, we take the $(n-1)/2$\,th power
of any first order deformation via the
\emph{Yukawa product}\footnote{On a smooth Calabi-Yau $n$-fold the (B-model)
Yukawa product on its middle dimensional (co)homology,
$$
H^{n-i,i}(X)\otimes H^{n-j,\,j}(X)\to H^{n-(i+j),\,i+j}(X),
$$
is given by the isomorphism $\Lambda^iT_X\cong\Omega_X^{n-i}$ (induced by
the complex volume form $\Omega$) and the cup product $H^i(\Lambda^iT_X)\otimes
H^j(\Lambda^jT_X)\to H^{i+j}(\Lambda^{i+j}T_X)$. In particular in odd dimensions
the $(n-1)/2$\,th power of a class in $H^1(T_X)$ lies in
$H^{\frac{n+1}2,\frac{n-1}2}(X)$.}, since this gives the ($(\frac{n+1}2,\frac{n-1}2)$
Hodge part of the) $(n-1)/2$\,th
derivative of the period point of $X$ under a path of smoothings with
the given first order derivative. In Section \ref{smoohom} we show that
this gives rise to homology relations amongst
the $A-B$ classes from smoothings that simultaneously
smooth each of the ODPs: 

\begin{custom}[Theorem \ref{biggy}.]
Fix a first order deformation $e\in\Ext^1(\Omega_X,\O_X)$.
Let $\oplus_i\epsilon_i$ be the image of $e$ under the global-to-local map
to $H^0(\ext^1(\Omega_X,\O_X))\cong H^0(\oplus_i\O_{p_i})$. Then we have the relation
$$
\sum\delta_i(A_i-B_i)=0\ \in H_{n-1}(Y),
$$
where $\delta_i=\epsilon_i^{\frac{n-1}2}$ and $e$ gives rise to a smoothing if and only if all $\delta_i$ are nonzero.
\end{custom}

\begin{rmks}
Our choice of generator of $\ext^1(\Omega_X,\O_X)\cong\oplus_i\O_{p_i}$ at each double point is explained in Section \ref{smoohom}. If there are $k$ ODPs then this gives an isomorphism $H^0(\ext^1(\Omega_X,\O_X))\cong\C^k$ and natural coordinates $\epsilon_1,\ldots,\epsilon_k$ thereon. In these coordinates, then, the image of the (linear!) global to local map $\Ext^1(\Omega_X, \O_X)\to H^0(\ext^1(\Omega_X,\O_X))$ is  contained in 
\[\left\{ (\epsilon_1,\ldots,\epsilon_k)\colon \sum_i \epsilon_i^{\frac{n-1}2} (A_i-B_i)=0 \text{ in } H_{n-1}(Y)\right\}.\]
In other words the linear subspace of local infinitesimal first order deformations that are realised globally is contained in an intersection of degree $\frac{n-1}2$ Fermat hypersurfaces in $H^0(\ext^1(\Omega_X,\O_X))$.
We are very grateful to the referee for this point of view on Theorem \ref{biggy}; it seems very rare to have such nonlinear constraints on either a (linear) first order deformation problem or a (linear) spectral sequence.
\end{rmks}

The analysis \eqref{integ} also suggests a differential geometric approach
to proving Theorem
\ref{biggy}. The idea would be to show that in a smoothing the vanishing
cycles $L_i$ smoothing the ordinary double points $p_i$ admit representatives
which are \emph{special Lagrangian}. This is widely expected to be true
(the local model above is indeed special Lagrangian)
but has not yet been proved.

It would follow that their pairings with the cohomology
class of the holomorphic $n$-form on the smoothing are all \emph{nonzero}
complex numbers $\delta_i$. The Poincar\'e dual of the holomorphic $n$-form
is an $n$-cycle (with complex coefficients) $\widetilde\Delta$ which deforms
naturally to an $n$-cycle $\Delta$ in the original Calabi-Yau with ODPs as
we degenerate back from the smoothing to the central fibre. Lifting to the
resolution $Y$ produces a chain $\overline\Delta$ whose boundary is
$\sum_i\delta_i(A_i-B_i)$. \smallskip

Conversely one might wonder whether all such homology relations arise from
smoothings. We formulate a precise question in Section \ref{hyper} and prove
it for nodal hypersurfaces of projective space. (This does \emph{not} mean
we are confident that it holds in general, however.)

\subsection*{Even dimensions}
There is no analogue of Theorem \ref{biggy} since odd dimensional
quadrics have no middle dimensional homology and the homology of $Y$ is simply
the direct sum of that of $X$ and the exceptional divisors $Q_i$. Anyway
when $n$ is even there is already a cycle on any smoothing $\widetilde X$
which intersects
the vanishing cycle nontrivially (thus playing the role of $\widetilde\Delta$
above in odd dimensions) -- the vanishing cycle itself. Therefore if the
converse
formulated in Section \ref{hyper} turned out to be true
then we would expect
\emph{any} nodal even-dimensional Calabi-Yau to have a smoothing, without
any conditions.

\subsection*{Acknowledgements} We thank Samuel Stark for helping with the computation of the torsion in Lemma \ref{piO}, and Lev Borisov, Mark Gross,
James Otterson and Dmitri Panov for useful conversations. In particular the simplified proof of Friedman's theorem sketched in Section \ref{3} was worked out jointly
with Otterson as part of his Imperial College PhD \cite{Ott}. The referee also made some suggestions which greatly improved the paper.

The first author was supported by a Forschungsstipendium of the Deutsche
Forschungsgemeinschaft (DFG). The second author was partially supported by
a Royal Society university research fellowship.

\section{Set up} \label{3}

We first fix some notation. Let $X$ be an $n$-dimensional compact analytic
space with trivial canonical bundle $K_X=\O_X$  whose only singularities
are ordinary double points $p_i\in X$. Denoting by $\pi\colon Y\to X$ the
blow-up of the double points, we assume further that $Y$ is K\"ahler.
The exceptional divisor of $\pi$
is a disjoint union of quadrics $Q=\coprod_iQ_i$
with normal bundle $\O_Q(Q)=\O_Q(-1)$ and $K_Y=\O_Y((n-2)Q)$.
So long as $X$ has at least one ODP then
$H^1(X,\O_X)=0$ by \cite[Theorem 8.3]{Ka85} so, as remarked by Friedman, every smoothing will again have trivial canonical bundle and is a 
smooth Calabi-Yau provided it is K\"ahler.

We denote by $T_Y$ the tangent sheaf of $Y$. We often use the isomorphism
\beq{cot-tan}
\Omega^k_Y\isom \Lambda^{n-k}T_Y\tensor K_Y=\Lambda^{n-k}T_Y((n-2)Q).
\eeq

\begin{lem} \label{ext}
$\ext^i(\Omega_X,\O_X)$ is nonzero only for $i=0$ and $i=1$. For $i=0$ we
denote the sheaf $\hom(\Omega_X,\O_X)$ by $T_X$. For $i=1$ we have
\[
\ext^1(\Omega_X,\O_X)\cong\oplus_i\O_{p_i}.
\]
\end{lem}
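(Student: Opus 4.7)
The plan is to reduce the problem to a purely local computation at each node. Since $\Omega_X$ is locally free on the smooth locus $X\setminus\{p_i\}$, the sheaves $\ext^i(\Omega_X,\O_X)$ for $i\ge1$ vanish there, so they are skyscrapers supported on the finite set of nodes. It therefore suffices to identify their stalks at each $p_i$ independently.

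Near a fixed $p_i$ I would choose an analytic chart in which $X$ is the hypersurface $\{f=0\}\subset\C^{n+1}$ with $f=\sum_{j=1}^{n+1}x_j^2$. Because this is a regular embedding, the conormal sequence yields a length-$2$ locally free resolution of $\Omega_X$,
\[
0\to\O_X\Rt{df}\O_X^{\oplus(n+1)}\to\Omega_X\to 0,
\]
in which the first map sends $1\mapsto(\partial_1 f,\ldots,\partial_{n+1}f)$. Applying $\hom(-,\O_X)$ and using that $\ext^i(\O_X,\O_X)=0$ for $i\ge1$ immediately gives $\ext^i(\Omega_X,\O_X)=0$ for $i\ge 2$, and exhibits $\ext^1(\Omega_X,\O_X)$ as the cokernel $\O_X/J_f$, where $J_f=(\partial_1 f,\ldots,\partial_{n+1}f)$ is the Jacobian ideal.

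For our standard node, $J_f=(x_1,\ldots,x_{n+1})$, which together with $f$ cuts out the reduced point $p_i$; hence $\O_X/J_f\cong\O_{p_i}$ as a skyscraper, and summing over the nodes gives $\ext^1(\Omega_X,\O_X)\cong\oplus_i\O_{p_i}$ as claimed.

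No step presents a serious obstacle: the only point worth mentioning is injectivity of the first map in the conormal sequence, which is automatic because $f$ is irreducible in the regular local ring $\O_{\C^{n+1}}$ and its linear partials $\partial_j f$ are not divisible by the quadratic $f$. The rest is a standard local $\ext$ computation.
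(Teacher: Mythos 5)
Your proof is correct and follows essentially the same route as the paper: resolve $\Omega_X$ locally by the conormal sequence of the hypersurface $\{\sum x_j^2=0\}\subset\C^{n+1}$, dualise, and identify $\ext^1$ with $\O_X/J_f\cong\O_{p_i}$, with the higher $\ext$s vanishing because the resolution has length two. The only addition is your (correct) remark on injectivity of the first map, which the paper leaves implicit.
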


\begin{proof}
Locally, around a double point, $X$ is isomorphic to a hypersurface in $\C^{n+1}$
described
by the equation $f=\sum_i x_i^2$. Thus we have a locally-free resolution
\begin{equation} \label{local}
0\to\O_X\Rt{df}\Omega_{\C^{n+1}}\restr X\to\Omega_X\to0.
\end{equation}
Dualising yields
$$
0\to T_X\to T_{\C^{n+1}}\restr X\Rt{df}\O_X\to\ext^1(\Omega_X,\O_X)\to0,
$$
with vanishing higher $\ext$s. The central map is $$\xymatrix{
\O_X^{\oplus(n+1)}\ar[rr]^(.55){(2x_1,\ldots,2x_n)} && \O_X}$$
with cokernel the structure sheaf of the origin.
\end{proof}

Therefore the local-to-global spectral sequence $H^i(\ext^j)\Rightarrow\Ext^{i+j}$
collapses to the long exact sequence
\beq{loctoglob}
0\to H^1(T_X)\to\Ext^1(\Omega_X,\O_X)\to\bigoplus_iH^0(\O_{p_i})
\to H^2(T_X).
\eeq
The second term governs the first order deformations of $X$, while
the first describes the equisingular ones: those that do not smooth the ordinary
double points. The third term looks locally about each
$p_i$ and compares a global deformation to the local universal deformation
$\sum_jx_j^2=\epsilon$.
The last arrow is the obstruction to finding a global first order deformation matching a given local one about each $p_i$.

\begin{lem} \label{piO} $L_j\pi^*\Omega_X=0$ for $j\ge1$; for $j=0$ we have the exact sequence
$$
0\to\O_Q(-1)\to\pi^*\Omega_X\to\Omega_Y(\log Q)(-Q)\to0.
$$
\end{lem}

\begin{proof}
We work locally on the blow up of the affine ODP $\{\sum x_i^2=0\}\subset\C^{n+1}$.
This is the subvariety of $\C^{n+1}\times\PP^n$ defined by the equations $x_iX_j=x_jX_i$ and $\sum X_i^2=0$, where the $X_i$ are the standard homogeneous coordinates on $\PP^n$. 

We start by working out the image of $(D\pi)^*\colon\pi^*\Omega_X\to\Omega_Y$ in the patch $X_{n+1}=1$ without loss of generality. This has coordinates $X_1,\ldots,X_n$ and $x_{n+1}$ (the others being determined by the relations $x_i=x_{n+1}X_i $) subject to the relation $X_1^2+\ldots X_n^2+1=0$. The pullbacks $\bar x_i:=x_i\circ\pi$ of the coordinates $x_i$ are $\bar x_i=X_ix_{n+1}$ and $\bar x_{n+1}=x_{n+1}$. Since $\Omega_X$ is locally generated by $dx_i$ ($i=1,\ldots n$) and $dx_{n+1}$, it follows that im\,$(D\pi)^*$ is locally generated by $d\bar x_i=X_idx_{n+1}+x_{n+1}dX_i$ and $dx_{n+1}$. It is therefore generated by $dx_{n+1}$ and $x_{n+1}dX_i,\ i=1,\ldots,n$.

The exceptional divisor $Q$ is described by $x_{n+1}=0$, so im\,$(D\pi)^*$ is locally generated by $dx_{n+1}/x_{n+1}$ and $dX_i$. But these are the generators of $\Omega_Y(\log Q)$, by definition, so im\,$(D\pi)^*=\Omega_Y(\log Q)(-Q)$.\medskip

Since $\Omega_Y$ is locally free and $(D\pi)^*$ is an isomorphism away from $Q$, its kernel is the torsion subsheaf of $\pi^*\Omega_X$. So we are left with checking this is $\O_Q(-1)$. For this we use the commutative diagram of exact sequences
$$\xymatrix@R=18pt@C=25pt{
0 \rto& \pi^*\O_X \dto_{s_Q^{-2}}\rto^-{d\sum x_i^2}& \pi^*\!\left(\Omega_{\C^{n+1}}\big|_X\right) \dto^{(Dp)^*}\rto& \pi^*\Omega_X \dto^{(D\pi)^*}\rto&0 \\
0 \rto& \O_Y(-2) \rto^-{d\sum X_i^2}& \Omega_{\Bl_0\C^{n+1}} \big|_Y\rto& \Omega_Y \rto& 0.\!}
$$
The first is the exact sequence of K\"ahler differentials \eqref{local} for the hypersurface $X=(f=0)\subset\C^n$, pulled back by $\pi^*$. (Since \eqref{local} defines a locally-free resolution of $\Omega_X$, and the pullback sequence is still exact, it follows that $L_j\pi^*\Omega_X=0$ for $j\ge1$.)
The second is the exact sequence of K\"ahler differentials for the hypersurface $Y=(\sum X_i^2=0)$ inside the blow-up $p\colon\Bl_0\C^{n+1}\to\C^{n+1}$. We use the fact that $\sum X_i^2=\sum x_i^2/s_E^2$ is a section of (the pullback from $\PP^n$ of) $\O(2)$. Here $s_E$ cuts out the exceptional divisor $E\cong\PP^n\subset\Bl_0\C^{n+1}$, restricting over $Y$ to $s_Q$ cutting out $Q\subset Y$.

Thinking of the diagram as a short exact sequence of (vertical) two-term complexes, the corresponding long exact of cohomologies gives
$$
0\to\ker(D\pi)^*\to\O_{2Q}(-2)\to\Omega_E|\_Q\to\Omega_Q\to0,
$$
where $2Q\subset Y$ is the scheme-theoretic doubling of the divisor $Q$. This sequence is the concatenation of the exact sequence
\begin{equation}\label{Dpi}
0\to\ker(D\pi)^*\to\O_{2Q}(-2)\to\O_{Q}(-2)\to0
\end{equation}
and the exact sequence of K\"ahler differentials for the hypersurface $Q\subset E$ (cut out by the section $\sum X_i^2$ of $\O_E(2)$),
$$
0\to\O_Q(-2)\to\Omega_E|\_Q\to\Omega_Q\to0.
$$
By \eqref{Dpi}, $\ker(D\pi)^*$ is therefore the twist by $\O(-2)$ of the ideal sheaf $\O_Q(1)$ of $Q\subset 2Q$.
\end{proof}

\begin{prop} \label{ott}
$R\hom(\Omega_X,\O_X)=R\pi_*\Omega^{n-1}_Y(\log Q)$.
\end{prop}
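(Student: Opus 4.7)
The plan is to apply Grothendieck duality for the proper birational morphism $\pi\colon Y\to X$ and then reduce everything to the wedge pairing on logarithmic differentials on the smooth variety $Y$.

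Since ordinary double points are rational Gorenstein singularities, $R\pi_*\O_Y=\O_X$. Combined with the projection formula and Lemma \ref{piO} (in particular $L\pi^*\Omega_X=\pi^*\Omega_X$) this yields $R\pi_*\pi^*\Omega_X=\Omega_X$. Grothendieck duality for $\pi$ then gives
$$R\hom(\Omega_X,\O_X)=R\hom(R\pi_*\pi^*\Omega_X,\O_X)=R\pi_*R\hom(\pi^*\Omega_X,\pi^!\O_X).$$
Since $X$ and $Y$ are both Gorenstein of the same dimension, the relative dualizing complex is concentrated in degree zero: $\pi^!\O_X=\omega_Y\otimes\pi^*\omega_X^{-1}=\O_Y((n-2)Q)$, using $\omega_X\cong\O_X$.

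Since $\pi^*\Omega_X=\Omega_Y^1(\log Q)(-Q)$ is locally free, the $R\hom$ on $Y$ collapses to $\hom$. Gathering twists and invoking the perfect wedge pairing
$$\Omega_Y^1(\log Q)\otimes\Omega_Y^{n-1}(\log Q)\to\Omega_Y^n(\log Q)=\omega_Y(Q)=\O_Y((n-1)Q),$$
one computes
$$\hom\bigl(\Omega_Y^1(\log Q)(-Q),\O_Y((n-2)Q)\bigr)=\hom\bigl(\Omega_Y^1(\log Q),\O_Y((n-1)Q)\bigr)=\Omega_Y^{n-1}(\log Q).$$
Applying $R\pi_*$ then finishes the proof.

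The step most likely to go wrong in a naive attempt is to use the plain adjunction $R\hom(F,R\pi_*G)=R\pi_*R\hom(L\pi^*F,G)$ with $G=\O_Y$ in place of Grothendieck duality: the former yields $R\pi_*\bigl(\Omega_Y^{n-1}(\log Q)(-(n-2)Q)\bigr)$, which must equal $R\pi_*\Omega_Y^{n-1}(\log Q)$ but only by a non-obvious vanishing on the exceptional fibres. Grothendieck duality sidesteps this: the twist $\O_Y((n-2)Q)$ in $\pi^!\O_X$ combines with the $\O_Y(-Q)$ from Lemma \ref{piO} and the $\O_Y(-(n-1)Q)$ implicit in $(\Omega_Y^1(\log Q))^\vee$ to cancel exactly, leaving the clean expression $\Omega_Y^{n-1}(\log Q)$.
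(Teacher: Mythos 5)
Your proof is correct and is essentially the paper's argument run in the opposite direction: both hinge on Lemma \ref{piO}, the rationality of the ODPs ($R\pi_*\O_Y\cong\O_X$), the perfect wedge pairing $\Omega^{n-1}_Y(\log Q)\otimes\Omega^1_Y(\log Q)(-Q)\to K_Y$, and duality down $\pi$. The only cosmetic difference is that the paper uses the adjunction $R\pi_*R\hom(L\pi^*\Omega_X,K_Y)\cong R\hom(\Omega_X,R\pi_*K_Y)$ and then identifies $R\pi_*K_Y\cong\O_X$ by relative Serre duality, whereas you apply Grothendieck duality directly and compute $\pi^!\O_X\cong\O_Y((n-2)Q)=K_Y$ --- the same duality statement packaged differently.
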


\begin{proof}
Notice that there is a perfect pairing of vector bundles $\Omega^{n-1}_Y(\log
Q)\otimes\Omega^1_Y(\log Q)(-Q)\to K_Y$ given by wedging differential forms.
Therefore
\begin{eqnarray*}
R\pi_*\Omega^{n-1}_Y(\log Q) &\cong& R\pi_*\hom(\Omega^1_Y(\log Q)(-Q),K_Y)
\\ &\xrightarrow\sim& R\pi_*R\hom(L\pi^*\Omega_X,K_Y) \\
&\cong& R\hom(\Omega_X,R\pi_*K_Y).
\end{eqnarray*}
Here the arrow, given by Lemma \ref{piO}, is an isomorphism because
its cone
$$
R\pi_*R\hom(\O_Q(-1),K_Y)\cong R\pi_*(K_Y|_Q[-1])\cong R\pi_*\O_Q(2-n)[-1]
$$
vanishes for $n\ge3$.

It remains to prove that $R\pi_*K_Y\cong\O_X$. By relative Serre duality
down the (relative dimension 0) projective morphism $\pi$, $R\pi_*K_Y$ is dual to
$(R\pi_*\O_Y)\otimes K_X$. But $K_X\isom\O_X$ and the ordinary double point is a rational singularity so $R\pi_*\O_Y\isom\O_X$.
\end{proof}

\section{Dimension three}
We first consider the case $n=3$ treated by Friedman, proving the variant
Theorem \ref{Fr2} of his result. Our treatment will
be very brief; the details can be worked out in much the same way as the
higher dimensional case in the next section.

By Proposition \ref{ott} the deformation space $\Ext^1(\Omega_X,\O_X)$ of
$X$ is simply $H^1(\Omega^2_Y(\log Q))$. There is a standard exact sequence
\beq{exres}
0\to\Omega^2_Y\to\Omega^2_Y(\log Q)\to\Omega^1_Q\to0,
\eeq
with the last map the residue map. We obtain the long exact sequence
$$
0\to H^1(\Omega^2_Y)\to H^1(\Omega^2_Y(\log Q))\to H^1(\Omega^1_Q)\to H^2(\Omega^2_Y).
$$
The last arrow is the Gysin map $H^{1,1}(Q)\to H^{2,2}(Y)$. This is an injection
on the $A_i+B_i$ classes since their intersection with $[Q_i]\in
H^{1,1}(Y)$ is $-2$. Dividing out by these classes gives
\beq{frex}
0\to H^1(\Omega^2_Y)\to\Ext^1(\Omega_X,\O_X)\to\bigoplus_i\langle A_i-B_i\rangle
\to\frac{H^{2,2}(Y)}{\bigoplus_i\langle A_i+B_i\rangle}\,.
\eeq
It is easy to show that in
fact this is precisely the exact sequence \eqref{loctoglob} for $n=3$; something
very similar will be shown in Corollary \ref{coho}
in the next section for dimensions $n>3$. As we will see there, the point
is the following: by Lemma \ref{ext} and Proposition \ref{ott}, the complex
$R\pi_*\Omega^2_Y(\log Q)$ has cohomology sheaves only in degrees 0 and 1.
And on applying $R\pi_*$ to the exact sequence \eqref{exres} the
first term provides all of the 0th cohomology of $R\pi_*\Omega^2_Y(\log Q)$,
and the third term provides all of the 1st cohomology. For $n=3$ this amounts
to
$$
\qquad R^0\pi_*\Omega^2_Y=
R^0\pi_*\Omega^2_Y(\log Q), \qquad R^{\ge1}\pi_*\Omega^2_Y=0,
$$
and
$$
\frac{R^1\pi_*\Omega_Q}{\bigoplus_i\langle A_i+B_i\rangle}=
R^1\pi_*\Omega^2_Y(\log Q), \qquad R^{\ne1}\pi_*\Omega_Q=0.
$$
Now a first order smoothing of $X$ is a class in
$\Ext^1(\Omega_X,\O_X)$ which maps to a nonzero first order smoothing of the ODP $p_i$
in the sequence \eqref{loctoglob} for each $i$. Therefore it is a class which maps to
a nonzero multiple $\delta_i$
of $A_i-B_i$ in \eqref{frex} for each $i$. By the exactness of \eqref{frex} such a class
exists if and only if $\sum_i\delta_i(A_i-B_i)$ is zero
in $H^{2,2}(Y)\big/\oplus_i\langle A_i+B_i\rangle$. This is equivalent to
$\sum_i\delta_i(A_i-B_i)$ being zero in the isomorphic group which is 
the kernel in $H^{2,2}(Y)$ of cupping with all of the PD$[Q_i]$ classes.
Therefore it is equivalent to $\sum_i\delta_i(A_i-B_i)$ being zero
in $H^{2,2}(Y)$.
And by \cite{Ka, Ti} any first order smoothing can be realised as
the first derivative of an actual smoothing.

\section{From smoothings to homology}\label{smoohom}
We assume $n\ge5$ from now on. (In fact $n\ge3$ works similarly with minor
modifications.) We first show that in the isomorphism
of Proposition \ref{ott}, we can pass to the subsheaf $\Omega^{n-1}_Y(\log
Q)(-(n-3)Q)$ of $\Omega^{n-1}_Y(\log Q)$.

\begin{prop} \label{Rpi1} Fix $n\geq 5$. The inclusions
\[
\Omega_Y^{n-1}(\log Q)(-(n-3)Q)\,\subseteq\ \Omega_Y^{n-1}(-(n-4)Q)
\,\subseteq\ \Omega^{n-1}_Y(\log Q)
\]
combined with Proposition \ref{ott} induce isomorphisms
\begin{equation} \label{big} 
\begin{split}
R\hom(\Omega_X,\O_X) &\isom R^{\leq 2}\pi_*\big(\Omega^{n-1}_Y(-(n-4)Q)\big)
\\ &\isom R^{\leq 2}\pi_*\big(\Omega^{n-1}_Y(\log Q)(-(n-3)Q)\big).
\end{split}
\end{equation}
\end{prop}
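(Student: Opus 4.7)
The plan is to show that both inclusions induce isomorphisms on $R^{\le 2}\pi_*$ by proving each cokernel sheaf has vanishing $R^{\le 2}\pi_*$. Both cokernels are supported on thickenings of the exceptional divisor $Q=\coprod_i Q_i$, so computing $R\pi_*$ reduces to sheaf cohomology on smooth $(n-1)$-dimensional quadrics. Throughout I use $K_Q = \O_Q(-(n-1))$ and $\O_Y(-Q)|_Q = \O_Q(1)$.

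For the inclusion $\Omega_Y^{n-1}(\log Q)(-(n-3)Q)\hookrightarrow \Omega_Y^{n-1}(-(n-4)Q)$, I would show that the cokernel is a single line bundle on $Q$. Composing restriction to $Q$ (which picks up the twist $\O_Q(n-4)$) with the projection onto the top graded piece $K_Q$ of the conormal wedge sequence $0 \to \O_Q(1)\otimes\Omega_Q^{n-2} \to \Omega_Y^{n-1}|_Q \to K_Q \to 0$ yields a surjection onto $K_Q(n-4) = \O_Q(-3)$, whose kernel I identify as $\Omega_Y^{n-1}(\log Q)(-(n-3)Q)$ by a local check around $Q$ (or equivalently by combining with the residue sequence twisted by $-(n-3)Q$). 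Since $\dim Q \ge 4$, Kodaira vanishing together with Serre duality gives $H^*(Q, \O_Q(-3)) = 0$ in every degree, so this inclusion in fact induces an isomorphism on all of $R\pi_*$.

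For $\Omega_Y^{n-1}(-(n-4)Q) \hookrightarrow \Omega_Y^{n-1}(\log Q)$, I filter the cokernel using the Poincar\'e residue together with the order-of-vanishing filtration, obtaining graded pieces $\Omega_Q^{n-2}$ (from the residue) and $\Omega_Y^{n-1}|_Q \otimes \O_Q(k)$ for $k=0,\ldots,n-5$; each of the latter splits further via the conormal wedge sequence into $\Omega_Q^{n-2}(k+1)$ and $K_Q(k) = \O_Q(k-(n-1))$. On each atomic piece I verify $H^i(Q,\cdot) = 0$ for $i \le 2$: Hodge theory on the smooth quadric gives $H^i(\Omega_Q^{n-2}) = 0$ for $i \ne n-2 \ge 3$; each $K_Q(k) = \O_Q(l)$ with $-(n-1) \le l \le -4$ has possibly nontrivial cohomology only in degree $n-1 \ge 4$; and for $\Omega_Q^{n-2}(m)$ with $1 \le m \le n-4$, Serre duality reduces the question to vanishing of $H^{\ge n-3}(Q, \Omega_Q^1(-m))$, which follows from the Euler sequence on $\PP^n$ and the conormal sequence for $Q \subset \PP^n$, both further reducing to line bundles $\O_Q(l)$ with $-(n-2) \le l \le -1$, squarely in Kodaira's vanishing range. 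Combining these vanishings with Proposition \ref{ott} and Lemma \ref{ext} (the latter making the truncation $\tau_{\le 2}$ vacuous on $R\hom(\Omega_X,\O_X)$) produces the three isomorphisms. The main technical obstacle is this chain of reductions for $\Omega_Q^{n-2}(m)$; the hypothesis $n \ge 5$ enters precisely to keep the potentially nonvanishing degrees ($n-2$ from $\Omega_Q^{n-2}$ and $n-1$ from $K_Q$) strictly above the truncation bound $2$.
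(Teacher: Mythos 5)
Your argument is correct and is essentially the paper's proof in a different packaging: the paper interleaves the chain $\Omega_Y^{n-1}(\log Q)(-(i+1)Q)\subseteq\Omega_Y^{n-1}(-iQ)\subseteq\Omega_Y^{n-1}(\log Q)(-iQ)$ for $i=0,\dots,n-4$, and the atomic pieces you reduce to ($\Omega_Q^{n-2}(i)$ and $K_Q(i)=\Omega_Q^{n-1}(i)$ for $0\le i\le n-4$) are exactly the cokernels $\Omega_Q^{n-1}(i)$, $\Omega_Q^{n-2}(i)$ appearing there. The vanishing of $H^{\le2}$ of these sheaves is the content of Proposition \ref{cohomQ} in the appendix, which the paper establishes with Bott's formula and the same restriction/conormal sequences you invoke.
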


\begin{proof}
By Proposition \ref{ott} and Lemma \ref{ext},
\[
R\hom(\Omega_X,\O_X)=R^{\le2}\pi_*\Omega^{n-1}_Y(\log Q).
\]

The inclusions
\[
\Omega^{n-1}_Y(\log Q)(-(i+1)Q)\,\subseteq\ \f{-i}\,\subseteq\ 
\Omega^{n-1}_Y(\log Q)(-iQ)
\]
have cokernels $\Omega^{n-1}_Q(i)$ and $\Omega^{n-2}_Q(i)$ respectively.
These have vanishing $H^{\le2}$
for $i=n-4,n-5,\ldots,0$ by Proposition \ref{cohomQ} in the Appendix.
Therefore they induce isomorphisms
on $R^{\le2}\pi_*$, proving the result inductively.
\end{proof}

\begin{prop} \label{Rpi2} Fix $n\ge5$.
The inclusion $T_Y\isom\,\Omega_Y^{n-1}(-(n-2)Q)\subseteq\,\Omega_Y^{n-1}(-(n-3)Q)$
induces isomorphisms
\begin{itemize}
\item $\pi_*T_Y\cong\pi_*\Omega^{n-1}_Y(-(n-3)Q)\cong\pi_*\Omega^{n-1}_Y(\log
Q)\cong T_X$,
\item $R^i\pi_*T_Y=0=R^i\pi_*\Omega^{n-1}_Y(-(n-3)Q)$ for $i=1,2$.
\end{itemize}
\end{prop}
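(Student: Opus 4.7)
The plan is to follow the strategy of Proposition \ref{Rpi1} one step further, refining the inclusion $T_Y = \Omega^{n-1}_Y(-(n-2)Q) \subseteq \Omega^{n-1}_Y(-(n-3)Q)$ into the chain
\[
T_Y \,\subseteq\, \Omega^{n-1}_Y(\log Q)(-(n-2)Q) \,\subseteq\, \Omega^{n-1}_Y(-(n-3)Q) \,\subseteq\, \Omega^{n-1}_Y(\log Q)(-(n-3)Q),
\]
whose three successive cokernels, obtained from the residue sequence and its twists exactly as in the proof of Proposition \ref{Rpi1}, are the $Q$-supported sheaves $\Omega^{n-2}_Q(n-2)$, $\Omega^{n-1}_Q(n-3)$ and $\Omega^{n-2}_Q(n-3)$. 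Proposition \ref{Rpi1} combined with Proposition \ref{ott} and Lemma \ref{ext} identifies the rightmost term with $R\hom(\Omega_X,\O_X)$: its $\pi_*$ is $T_X$, its $R^1\pi_*$ is $\bigoplus_i\O_{p_i}$, and $R^2\pi_*$ vanishes.

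I then apply $R\pi_*$ to each of the three short exact sequences. Since every successive cokernel is supported on the exceptional quadric $Q$, its $R^j\pi_*$ at a node $p_i$ equals $H^j(Q_i,\cdot)$. Chasing the long exact sequences, the proposition reduces to three cohomological assertions on the smooth $(n-1)$-dimensional quadric $Q$:
\begin{itemize}
\item $H^{\le 2}(Q,\Omega^{n-2}_Q(n-2))=0$;
\item $H^{\le 2}(Q,\Omega^{n-1}_Q(n-3))=0$;
\item $H^0(Q,\Omega^{n-2}_Q(n-3))=0$ and the connecting map $\bigoplus_i\O_{p_i}\to\bigoplus_i H^1(Q_i,\Omega^{n-2}_{Q_i}(n-3))$ from the long exact sequence is an isomorphism.
\end{itemize}

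The middle vanishing is immediate from $\Omega^{n-1}_Q=K_Q=\O_Q(-(n-1))$: it reduces to $H^{\le 2}(\O_Q(-2))=0$ for $n\ge 5$, which follows from the Koszul sequence $0\to\O_{\PP^n}(-4)\to\O_{\PP^n}(-2)\to\O_Q(-2)\to 0$ and standard cohomology of $\PP^n$. The remaining quadric computations are Bott-type statements that I expect to follow from (an extension of) Proposition \ref{cohomQ} in the Appendix; for instance Serre duality plus the restricted Euler sequence shows $H^1(\Omega^{n-2}_Q(n-3))$ is one-dimensional, matching the rank of $\O_{p_i}$. The main obstacle is the last bullet: that the connecting map is an isomorphism and not merely injective. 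This is precisely the statement that the local smoothing sheaf $\bigoplus_i\O_{p_i}$ of Lemma \ref{ext} first appears at the transition $\Omega^{n-1}_Y(-(n-3)Q)\hookrightarrow\Omega^{n-1}_Y(\log Q)(-(n-3)Q)$, and it is what forces $R^1\pi_*T_Y=R^2\pi_*T_Y=0$, consistent with unobstructedness of equisingular deformations.
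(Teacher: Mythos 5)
Your reduction is exactly the paper's: the same chain of inclusions, the same three cokernels $\Omega^{n-2}_Q(n-2)$, $\Omega^{n-1}_Q(n-3)$ and $\Omega^{n-2}_Q(n-3)$, and your first two bullets are indeed supplied by Proposition \ref{cohomQ}. But your third bullet is where the content of the Proposition actually lives, and you have not proved it. Applying $R\pi_*$ to $0\to\Omega^{n-1}_Y(-(n-3)Q)\to\Omega^{n-1}_Y(\log Q)(-(n-3)Q)\to\Omega^{n-2}_Q(n-3)\to0$ and using $\pi_*\Omega^{n-2}_Q(n-3)=0$, $R^1\pi_*\Omega^{n-1}_Y(\log Q)(-(n-3)Q)\cong\oplus_i\O_{p_i}$ and $R^2\pi_*\Omega^{n-1}_Y(\log Q)(-(n-3)Q)=0$ gives, at each node,
\[
0\to R^1\pi_*\Omega^{n-1}_Y(-(n-3)Q)\to\O_{p_i}\to\O_{p_i}\to R^2\pi_*\Omega^{n-1}_Y(-(n-3)Q)\to0,
\]
and \emph{a priori} the middle map of length-one skyscrapers could be zero, in which case both outer terms would be nonzero and the Proposition would fail. (Incidentally, the danger is not, as you put it, that the map might be ``merely injective'': between length-one skyscrapers injectivity already forces an isomorphism. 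The danger is that the map vanishes.) Your closing sentence --- that the isomorphism ``is precisely the statement that the local smoothing sheaf first appears at this transition'' --- merely restates what has to be proved; it is not an argument.

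The paper closes this gap with one extra input not present in your proposal: $R^1\pi_*T_Y=0$. Since $R^1\pi_*T_Y$ and $R^2\pi_*T_Y$ are supported at the nodes, they can be computed in the local model, where $Y$ is the total space of $\O_Q(-1)$ (the resolution of the affine cone over $Q$) and $X$ is affine; rigidity of that total space gives $R^1\pi_*T_Y=0$. Because the first half of the argument already identifies $R^{\le2}\pi_*T_Y$ with $R^{\le2}\pi_*\Omega^{n-1}_Y(-(n-3)Q)$, this vanishing forces the map $\O_{p_i}\to\O_{p_i}$ above to be injective, hence an isomorphism by length count, hence $R^2\pi_*T_Y=0$ as well. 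If you supply this local rigidity computation (or an equivalent substitute showing the map is nonzero), your proof is complete; without it, it is not.
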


\begin{proof}
The inclusions
\[
\Omega^{n-1}_Y(-(n-2)Q)\,\subseteq\ \Omega^{n-1}_Y(\log Q)(-(n-2)Q)\,\subseteq\
\Omega^{n-1}_Y(-(n-3)Q)
\]
have cokernels $\Omega^{n-2}_Q(n-2)$ and $\Omega^{n-1}_Q(n-3)$ respectively.
By Proposition \ref{cohomQ} in the Appendix these have vanishing
$H^{\le2}$, so they induce isomorphisms on $R^{\le2}\pi_*$.

The inclusion 
\[
\Omega^{n-1}_Y(-(n-3)Q)\,\subseteq\ \Omega^{n-1}_Y(\log Q)(-(n-3)Q)
\]
has cokernel $\Omega^{n-2}_Q(n-3)$, which has no $H^0$ by Proposition \ref{cohomQ}.
It therefore induces an isomorphism on $\pi_*$, which by Proposition \ref{Rpi1}
and Lemma \ref{ext} gives the first sequence of isomorphisms.

Since $R^1\pi_*T_Y$ and $R^2\pi_*T_Y$ are supported on the double points
we can calculate them in the local model $Y=\O_Q(-1)$ and $X=\{\sum x_i^2=0\}\subset \C^{n+1}$. Since  $\O_Q(-1)$ is
rigid and $X$ is affine it follows that $R^1\pi_*T_Y=0$. We therefore get
the exact sequence
\[ 0 \to R^1\pi_*\Omega_Y^{n-1}(\log Q)(-(n-3)Q)\to R^1\pi_*\Omega^{n-2}_Q(n-3)\to
R^2\pi_*T_Y\to 0.\]
The first term is isomorphic to $\O_0$ by Proposition \ref{Rpi1} and Lemma
\ref{ext}. The second term is also $\O_0$, by Proposition \ref{cohomQ}. Therefore
$R^2\pi_*T_Y=0$.
\end{proof}

\begin{cor} \label{coho}
Taking sheaf cohomology of the exact sequence
\beq{res}
0\to\Omega^{n-1}_Y(-(n-3)Q)\to\Omega^{n-1}_Y(\log Q)(-(n-3)Q)\Rt{\mathrm{Res}}
\Omega^{n-2}_Q(-(n-3)Q)\to0
\eeq
induces the sequence \eqref{loctoglob}.
\end{cor}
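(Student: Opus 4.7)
The plan is to apply $R\pi_*$ to the short exact sequence \eqref{res} and then take global sections on $X$, identifying each term via the propositions already established. First I would use Propositions~\ref{Rpi1} and~\ref{Rpi2} together with Lemma~\ref{ext} to identify the pushforwards of the first two sheaves: $R\pi_*\Omega^{n-1}_Y(-(n-3)Q)\cong T_X$ sits in degree $0$ only, while $R^{\le 2}\pi_*\Omega^{n-1}_Y(\log Q)(-(n-3)Q)\cong R\hom(\Omega_X,\O_X)$ has cohomology sheaves $T_X$ in degree $0$ and $\oplus_i\O_{p_i}$ in degree $1$.  The inclusion of the first sheaf into the second induces the identity $T_X\to T_X$ on $\pi_*$, by the chain of identifications in Proposition~\ref{Rpi2}.

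Next, applying $R\pi_*$ to \eqref{res} and inserting these computations forces the residue term to satisfy
$$\pi_*\Omega^{n-2}_Q(-(n-3)Q)=0,\qquad R^1\pi_*\Omega^{n-2}_Q(-(n-3)Q)\cong\bigoplus_i\O_{p_i},\qquad R^{\ge 2}\pi_*\Omega^{n-2}_Q(-(n-3)Q)=0.$$
These can equally be read off directly from Proposition~\ref{cohomQ}, since $\O_Y(-Q)\restr{Q}=\O_Q(1)$ gives $\Omega^{n-2}_Q(-(n-3)Q)=\Omega^{n-2}_Q(n-3)$, and $\pi$ restricted to each $Q_i$ collapses it to the point $p_i$.

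Then I would apply $R\Gamma(X,-)$.  Leray combined with the vanishing of $R^{\ge 1}\pi_*\Omega^{n-1}_Y(-(n-3)Q)$ gives $H^i(Y,\Omega^{n-1}_Y(-(n-3)Q))=H^i(X,T_X)$; Proposition~\ref{Rpi1} together with the local-to-global spectral sequence $H^p(\ext^q)\Rightarrow\Ext^{p+q}$ gives $H^i(Y,\Omega^{n-1}_Y(\log Q)(-(n-3)Q))=\Ext^i(\Omega_X,\O_X)$ for $i\le 2$; and the residue term contributes only in degree~$1$, with $H^1=\bigoplus_i H^0(\O_{p_i})$.  The long exact cohomology sequence of \eqref{res} then reads
$$0\to H^1(X,T_X)\to\Ext^1(\Omega_X,\O_X)\to\bigoplus_i H^0(\O_{p_i})\to H^2(X,T_X),$$
which is precisely \eqref{loctoglob}.

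The only real subtlety, and the step I expect to require the most care, is the \emph{naturality} of these identifications: one must check that the connecting homomorphism to $H^2(X,T_X)$ produced by the residue sequence coincides with the local-to-global obstruction map in \eqref{loctoglob}.  This will follow from the observation that both maps are the boundary of the \emph{same} distinguished triangle
$$R\pi_*\Omega^{n-1}_Y(-(n-3)Q)\To R\pi_*\Omega^{n-1}_Y(\log Q)(-(n-3)Q)\To R\pi_*\Omega^{n-2}_Q(-(n-3)Q)$$
in $D^b(X)$, once one observes that the canonical quasi-isomorphisms supplied by Propositions~\ref{Rpi1} and~\ref{Rpi2} identify this triangle with the truncation triangle $T_X\to R\hom(\Omega_X,\O_X)\to\ext^1(\Omega_X,\O_X)[-1]$ underlying the local-to-global spectral sequence.
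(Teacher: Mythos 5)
Your proposal is correct and follows essentially the same route as the paper: both identify $R^{\le 2}\pi_*$ of the sequence \eqref{res} with the tautological truncation triangle $\H^0(E)\to E\to\H^1(E)[-1]$ for $E=R\hom(\Omega_X,\O_X)$ (using Propositions \ref{Rpi1}, \ref{Rpi2} and \ref{cohomQ} to see that the first term carries all of $\H^0(E)=T_X$ and the residue term all of $\H^1(E)=\oplus_i\O_{p_i}$), and then take hypercohomology to recover \eqref{loctoglob}. Your closing remark on naturality is exactly the paper's key step of matching the two distinguished triangles, so nothing is missing.
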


\begin{proof}
Let $E$ denote the complex
\[
E=R\hom(\Omega_X,\O_X)=R^{\leq 2}\pi_*\big(\Omega^{n-1}_Y(\log Q)(-(n-3)Q)\big).
\]
By Lemma \ref{ext} this has cohomology sheaves $\H^0(E)=T_X$
and $\H^1(E)=\oplus_i\O_{p_i}$
only in degrees 0 and 1. The sequence \eqref{loctoglob} arises by taking
$H^1$ and $H^2$ of the tautological exact triangle
\beq{hh}
\H^0(E)\to E\to \H^1(E)[-1].
\eeq
By Proposition \ref{Rpi2} the $\H^0(E)$ part comes entirely
from the first term of \eqref{res}. That is, the inclusion
$\Omega_Y^{n-1}(-(n-3)Q)\subseteq
\Omega_Y^{n-1}(\log Q)(-(n-3)Q)$ induces an isomorphism
\[
R^{\le2}\pi_*\Omega_Y^{n-1}(-(n-3)Q)\Rt{\simeq}\H^0(E).
\]
Therefore by \eqref{res} the residue map
$\Omega_Y^{n-1}(\log Q)(-(n-3)Q)\to\Omega_Q^{n-2}(n-3)$
induces an isomorphism on $R^1\pi_*$, while $R^0\pi_*$ and $R^2\pi_*$ vanish
on $\Omega_Q^{n-2}(n-3)$ by Proposition \ref{cohomQ}. 
Since $R^1\pi_*\big(\Omega^{n-1}_Y(\log Q)(-(n-3)Q)\big)=\H^1(E)$ this says
that the residue map induces an isomorphism
\[
\H^1(E)[-1]\Rt{\simeq}R^{\le2}\pi_*\Omega_Q^{n-2}(n-3).
\]
That is, the $\H^1(E)[-1]$ part of $E$ all comes from the third term of \eqref{res}.

Therefore applying $R^{\le2}\pi_*$ to \eqref{res} gives an exact triangle
\begin{multline*}
R^{\le2}\pi_*\big(\Omega^{n-1}_Y(-(n-3)Q)\big)\to R^{\le2}\pi_*\big(\Omega^{n-1}_Y(\log
Q)(-(n-3)Q)\big) \\ \to R^{\le2}\pi_*\big(\Omega^{n-2}_Q(-(n-3)Q)\big),
\end{multline*}
and this is exactly \eqref{hh}. So applying
$H^i_Y=H^i_X(R\pi_*)=H^i_X(R^{\le2}\pi_*)$ (for $i=1,2$) to the sequence
\eqref{res} gives \eqref{loctoglob}.
\end{proof}

\begin{rmk}
By Proposition \ref{Rpi2}, $H^i(\Omega^{n-1}_Y(-(n-3)Q))=H^i(T_Y)$ for $i\le2$.
Therefore \eqref{loctoglob}, the cohomology exact sequence of
\eqref{res}, can also be written
\[
0\to H^1(T_Y)\to\Ext^1(\Omega_X,\O_X)\to\bigoplus_iH^0(\O_{p_i})\to H^2(T_Y)\to\dots
\]
That is, the equisingular deformations of $X$ correspond to deformations
of the resolution $Y$; a well known fact in more general situations.

From Proposition \ref{Rpi1} we further deduce that
\[\Ext^1(\Omega_X, \O_X)\isom H^1(\Omega_Y^{n-1}(\log Q))\isom H^1(\Omega^{n-1}_Y)=H^{n-1,1}(Y),\]
so deformations of $X$ are controlled by the topology of $Y$. This is the
origin of the unobstructedness result of \cite{Ka}: any
smoothing $X_t$ of $X$ has deformation space $H^{n-1,1}(X_t)$ which can
be shown by mixed Hodge structures to be of the same dimension
as $H^{n-1,1}(Y)$. Therefore $T^1$-lifting applies.
\end{rmk}

\subsection{The Yukawa product}
Now fix $n=2m+1\geq 5$ to be odd. The isomorphisms $\Omega_Y^{n-1}(-(n-4)Q)\cong T_Y(2Q)$ and $\Omega^{m+1}_Y\cong\Lambda^mT_Y((n-2)Q)$ of \eqref{cot-tan} induce
\beq{wedgeiso}
\bigwedge{\!\!}^m\big(\Omega_Y^{n-1}(-(n-4)Q)\big)\ \cong\ 
\bigwedge{\!\!}^m\big(T_Y(2Q)\big)\ \cong\ \Omega^{m+1}_Y(Q).
\eeq
\begin{lem} \label{lemwedge}
Under the above isomorphism \eqref{wedgeiso}, the subsheaf
\beq{incl}
\bigwedge{\!\!}^m\big(\Omega_Y^{n-1}(\log Q)(-(n-3)Q)\big)\ \subseteq\ 
\bigwedge{\!\!}^m\big(\Omega_Y^{n-1}(-(n-4)Q)\big)
\eeq
is  $\Omega^{m+1}_Y(\log Q)\subseteq\,\Omega^{m+1}_Y(Q)$.
\end{lem}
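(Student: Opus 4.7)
The strategy is to rewrite both sides of \eqref{incl} as twists of exterior powers of the log tangent sheaf $T_Y(-\log Q)=\Omega_Y^1(\log Q)^\vee$ and of $T_Y$ respectively; the statement will then reduce to the tautological inclusion $T_Y(-\log Q)\subseteq T_Y$ put through $\bigwedge^m$ and a common twist.

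First I would establish the logarithmic analogue of \eqref{cot-tan}. Since $\Omega^1_Y(\log Q)$ is locally free, the wedge pairing
$$\Omega^k_Y(\log Q)\tensor\Omega^{n-k}_Y(\log Q)\longrightarrow\Omega^n_Y(\log Q)=K_Y(Q)$$
is perfect, and yields
$$\Omega^{n-k}_Y(\log Q)\isom\Big(\bigwedge{\!\!}^k T_Y(-\log Q)\Big)\tensor K_Y(Q).$$
Taking $k=1$ and using $K_Y=\O_Y((n-2)Q)$ (so $K_Y(Q)=\O_Y((n-1)Q)$), then twisting by $\O_Y(-(n-3)Q)$, the subsheaf in \eqref{incl} is rewritten as
$$\Omega^{n-1}_Y(\log Q)(-(n-3)Q)\isom T_Y(-\log Q)(2Q)\ \subseteq\ T_Y(2Q)\isom\Omega^{n-1}_Y(-(n-4)Q),$$
the inclusion being induced by the tautological $T_Y(-\log Q)\subseteq T_Y$.

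Next I would take $m$th exterior powers. Using $2m=n-1$ gives
$$\Big(\bigwedge{\!\!}^m T_Y(-\log Q)\Big)\tensor\O_Y((n-1)Q)\ \subseteq\ \Big(\bigwedge{\!\!}^m T_Y\Big)\tensor\O_Y((n-1)Q).$$
Specialising the log-canonical duality above to $k=m$ (so $n-k=m+1$) identifies the left side with $\Omega^{m+1}_Y(\log Q)$, while \eqref{cot-tan} identifies the right side with $\Omega^{m+1}_Y(Q)$; the latter is precisely \eqref{wedgeiso}. Thus the inclusion in \eqref{incl} is exactly the natural $\Omega^{m+1}_Y(\log Q)\subseteq\Omega^{m+1}_Y(Q)$, as desired.

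The one piece of bookkeeping to check is that the log-canonical isomorphisms used on the left are genuinely compatible with \eqref{cot-tan} and \eqref{wedgeiso} on the right, so that the subsheaf identification is preserved. This is formal: all the relevant isomorphisms are induced by contraction with the same fixed trivialisation of $K_Y\tensor\O_Y(-(n-2)Q)$ (the one underlying \eqref{cot-tan}), and the inclusion $T_Y(-\log Q)\subseteq T_Y$ dualises to $\Omega^1_Y\subseteq\Omega^1_Y(\log Q)$ in a way that commutes with all the exterior powers. Should this abstract check feel slippery, an equivalent verification is available in local coordinates $x_1,\ldots,x_n$ with $Q=\{x_1=0\}$: writing $\beta_i=dx_1\wedge\cdots\widehat{dx_i}\cdots\wedge dx_n$, the subsheaf on the left of \eqref{incl} has local basis $\{x_1^{n-3}\beta_1,\ x_1^{n-4}\beta_i:i\ge 2\}$, and the $m$-fold wedges of these elements are seen directly to span the local generators of $\Omega^{m+1}_Y(\log Q)$ as a subsheaf of $\Omega^{m+1}_Y(Q)$.
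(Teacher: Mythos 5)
Your proof is correct and follows essentially the same route as the paper's: both translate the inclusion into $T_Y(-\log Q)(2Q)\subseteq T_Y(2Q)$ (the paper writes the subsheaf as the kernel $K$ of $T_Y(2Q)\to N_Q(2Q)$, which is the same sheaf), take $m$th wedge powers, and identify the result with $\Omega^{m+1}_Y(\log Q)\subseteq\Omega^{m+1}_Y(Q)$. The only cosmetic difference is that the paper characterises $\bigwedge^{m}K$ as the kernel of $\bigwedge^{m}\big(T_Y(2Q)\big)\to\bigwedge^{m-1}\big(T_Y(2Q)\big)\restr Q\otimes N_Q(2Q)$, hence as the kernel of $\Omega^{m+1}_Y(Q)\to\Omega^{m+1}_Q(Q)$, rather than invoking the logarithmic perfect pairing; this sidesteps the compatibility check you flag at the end.
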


\begin{proof}
Define $K$ be the kernel of $T_Y(2Q)\to N_Q(2Q)$, where $N_Q=\O_Q(-1)$ is the normal bundle to $Q$. Then under the isomorphism
\eqref{cot-tan} the inclusion $\Omega_Y^{n-1}(\log Q)(-(n-3)Q)
\subseteq\Omega_Y^{n-1}(-(n-4)Q)$ becomes $K\subseteq T_Y(2Q)$.

Wedging $m$ times shows that \eqref{incl} is isomorphic to
$\bigwedge^{\!m\!}K\subseteq\bigwedge^{\!m\!}\big(T_Y(2Q)\big)$, i.e.
the kernel of $\bigwedge^{\!m\!}\big(T_Y(2Q)\big)\to
\bigwedge^{\!m-1\!}\big(T_Y(2Q)\big)\restr Q\otimes N_Q(2Q)$. Under the final isomorphism of \eqref{wedgeiso} this is the kernel of $\Omega^{m+1}_Y(Q)\to
\Omega^{m+1}_Q(Q)$. But this is $\Omega^{m+1}_Y(\log Q)$, as claimed.
\end{proof}

Next we use the Poincar\'e residue maps $\Omega^k_Y(\log Q)\Rt{\mathrm{Res}}\Omega^{k-1}_Q$.
These factor through the restriction of $\Omega^k_Y(\log Q)$ to $Q$, followed by the quotient map in the exact sequence $0\to\Omega^k_Q\to\Omega^k_Y(\log Q)\restr Q\to\Omega^{k-1}_Q\to0$ of bundles on $Q$.

Combined with the above isomorphism of Lemma \ref{lemwedge} we get the following Lemma. The right hand vertical map is given by a similar construction to the others of this section, but on $Q$ instead of $Y$. That is, we use the canonical bundle of $Q$ to identify $\Omega^{n-2}_Q(n-3)$ with $T_Q(-2)$, wedge $m$ times and then identify $(\Lambda^mT_Q)(1-n)$ with $\Omega^m_Q$.

\begin{lem} The following diagram is commutative,
$$
\spreaddiagramrows{-.5pc}
\spreaddiagramcolumns{1pc}
\xymatrix{
\bigwedge{\!\!}^m\big(\Omega^{n-1}_Y(\log Q)(-(n-3)Q)\big)
\ar[d]^\sim\ar[r]^(.6){\bigwedge^{\!m\!}\mathrm{Res}\ }
& \bigwedge{\!\!}^m\big(\Omega^{n-2}_Q(n-3)\big) \ar[d]^\sim \\
\Omega^{m+1}_Y(\log Q) \ar[r]\ar[r]^(.6){\mathrm{Res}} & \Omega^{m}_Q.}
$$
\end{lem}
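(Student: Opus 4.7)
The claim is a local statement about sheaf maps, so I would verify it in a coordinate chart around a point of $Q$. Choose local coordinates $(t,y_1,\ldots,y_{n-1})$ on $Y$ with $Q=\{t=0\}$, and let $\omega_0:=dt\wedge dy_1\wedge\cdots\wedge dy_{n-1}$ be a local trivialization of $K_Y$; under the chosen iso $K_Y\cong\O_Y((n-2)Q)$, $\omega_0$ corresponds to $1/t^{n-2}$. Both vertical maps in the diagram are instances of the same recipe --- use contraction with a volume form to identify $\Omega^k\cong\Lambda^{\bullet}T\otimes K$, take $m$-th wedge powers of the resulting multivectors, and then identify back with forms --- applied once on $Y$ (with $\omega_0$) and once on $Q$ (with $dy|_Q:=dy_1\wedge\cdots\wedge dy_{n-1}|_Q$). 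The compatibility between the two is governed by the Poincar\'e residue identity $\mathrm{Res}(\omega_0/t)=dy|_Q$.

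Concretely, via Lemma \ref{lemwedge} the local basis of $K$ is $\tfrac{1}{t}\partial_t,\tfrac{1}{t^2}\partial_{y_i}$, corresponding (up to sign) to the generators $\eta_0:=t^{n-3}\,dy_1\wedge\cdots\wedge dy_{n-1}$ and $\eta_i:=t^{n-4}\,dt\wedge\widehat{dy_i}$ of $\Omega^{n-1}_Y(\log Q)(-(n-3)Q)$, where $\widehat{dy_i}$ omits $dy_i$. Generators of $\bigwedge^m K$ are then of two types: $\xi_I:=\bigwedge_{i\in I}\tfrac{1}{t^2}\partial_{y_i}$ with $|I|=m$ (Type A), and $\tfrac{1}{t}\partial_t\wedge\xi_J$ with $|J|=m-1$ (Type B). I would trace both types around the diagram. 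Down-then-right: the left vertical (Lemma \ref{lemwedge}, using $\bigwedge^m T_Y(2Q)\cong\Omega^{m+1}_Y(Q)$, which exists because $n=2m+1$) sends $\xi_I\mapsto\pm\tfrac{dt}{t}\wedge\widehat{dy_I}$ and the Type B generator $\mapsto\pm\widehat{dy_J}$, after which the bottom residue produces $\pm\widehat{dy_I}|_Q$ and $0$ respectively. Right-then-down: $\bigwedge^m\mathrm{Res}$ kills Type B (since $\mathrm{Res}(\eta_0)=0$) and sends Type A to $\bigwedge_{i\in I}(\widehat{dy_i}|_Q\otimes s)$, with $s=t^{n-3}|_Q$ a local trivialization of $\O_Q(n-3)$; the right vertical, built from the same recipe on $Q$ with trivializations matched to the $Y$-side via the residue identity, then returns $\pm\widehat{dy_I}|_Q$ and $0$. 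The two paths therefore agree on all generators, so by $\O_Y$-linearity the diagram commutes.

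The principal obstacle is bookkeeping: tracking the twists ($K_Y$, $K_Q$, $\O_Y(kQ)$, $\O_Q(k)$), the various powers of $t$ that appear at each adjunction step, and the signs coming from the combinatorial identity $\partial_{y_I}\lrcorner(dy_1\wedge\cdots\wedge dy_{n-1})=\epsilon(I)\widehat{dy_I}$. The signs match uniformly because the same identity is applied once on $Y$ (via contraction with $\omega_0$) and once on $Q$ (via contraction with $dy|_Q$), and the trivializations have been chosen compatibly via $\mathrm{Res}(\omega_0/t)=dy|_Q$; there is no deeper conceptual obstruction.
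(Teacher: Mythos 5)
Your proof is correct in outline and would go through, but it is a genuinely different argument from the one in the paper. You check commutativity by an explicit computation in an adapted coordinate chart, tracing the two types of generators of $\bigwedge^{m}K$ (those involving $\frac1t\partial_t$ and those not) through both paths; the delicate points are exactly the ones you flag, namely that the trivialisations of $K_Y$ and $K_Q$ must be matched through the adjunction/residue identity $\mathrm{Res}(\omega_0/t)=dy\restr{Q}$, and that the signs coming from contracting with $dt\wedge dy$ on $Y$ versus with $dy\restr{Q}$ on $Q$ must be seen to agree (they do: on a Type A generator both paths produce the same sign times $\big(\bigwedge_{i\in I}\partial_{y_i}\big)\lrcorner\, dy\restr{Q}$, and both paths kill Type B). The paper argues structurally instead: since wedging commutes with restriction to $Q$ and both residue maps factor through restriction to $Q$, the statement reduces to a claim about an exact sequence $0\to L\to E\to F\to0$ of bundles on $Q$ of ranks $1,n,n-1$ with $n=2m+1$, namely that the sequence $0\to L\otimes\Lambda^{m-1}F\to\Lambda^{m}E\to\Lambda^{m}F\to0$ tensored with $\Lambda^{n}E^{*}$ is the same as $\Lambda^{m+1}$ of the dual sequence $0\to F^{*}\to E^{*}\to L^{*}\to0$; this is then applied to $L=\Omega^{n-1}_Q(n-3)$, $E=\Omega^{n-1}_Y(\log Q)\restr{Q}(n-3)$, $F=\Omega^{n-2}_Q(n-3)$. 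The paper's route buys coordinate-freedom and absorbs all the sign and twist bookkeeping into one standard multilinear-algebra identity; yours is more elementary and self-contained but asks the reader to trust that the powers of $t$ and the signs have been tracked consistently. Note also that for the downstream application (Theorem \ref{biggy}) commutativity up to a universal nonzero constant would already suffice, since the generator $\eta$ of Proposition \ref{R_Q} is only normalised up to scale, so the sign-chasing in your version is less critical than it appears.
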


\begin{proof}
For bundles, wedging and restriction to $Q$ commute, and as remarked above the Poincar\'e residue maps factor through restriction to $Q$. Therefore the above claim is a statement that can be checked on $Q$, where it follows from the following standard duality of the Koszul exact
sequence.

Suppose that $0\to L\to E\to F\to0$ is an exact sequence of vector bundles on $Q$ of ranks $1,n$ and $n-1$ respectively. 
Then we get an exact sequence
\beq{lamm}
0\to L\otimes\Lambda^{m-1}F\to\Lambda^mE\to\Lambda^mF\to0.
\eeq
On tensoring with the determinant $\Lambda^nE^*\cong L^*\otimes\Lambda^{n-1}F^*$ and recalling that $n=2m+1$, this becomes the sequence
\beq{lamm2}
0\to\Lambda^{m+1}F^*\to\Lambda^{m+1}E^*\to L^*\otimes\Lambda^mF^*\to0.
\eeq
Then it is a standard fact that this is the same exact sequence as the one obtained by taking $\Lambda^{m+1}$ of the dual sequence $0\to F^*\to E^*\to L^*\to0$.

We apply this to $L,E$ and $F$ being $\Omega^{n-1}_Q(n-3),
\,\Omega^{n-1}_Y(\log Q)\restr Q(n-3)$ and $\Omega^{n-2}_Q(n-3)$ respectively.
The two quotient maps in the sequences \eqref{lamm} and \eqref{lamm2}
become (on composition with the restriction map to $Q$) the two residue maps in the Lemma.
\end{proof}

Combined with the cup product on cohomology this gives a commutative diagram $$
\spreaddiagramrows{-.5pc}
\spreaddiagramcolumns{1pc}
\xymatrix{
\bigotimes^m H^1\big(\Omega^{n-1}_Y(\log Q)(-(n-3)Q)\big) \ar[d]
\ar[r]^(.57){\mathrm{Res^{\otimes m}}}
& \bigotimes^m H^1(\Omega^{n-2}_Q(n-3))\ar[d] \\
H^m\big(\Omega^{m+1}_Y(\log Q)\big) \ar[r]\ar[r]^(.6){\mathrm{Res}} & H^m(\Omega^{m}_Q).}
$$
Since both wedging and cup product are skew-commutative, the vertical multiplication maps are now commutative. Composing with the $m$-th tensor product map $V\to V^{\otimes m}$ in each case we get a commutative diagram whose vertical maps are nonlinear:
\beq{resdiag}
\spreaddiagramrows{-.5pc}
\xymatrix{
H^1\big(\Omega^{n-1}_Y(\log Q)(-(n-3)Q)\big) \ar[d]^{\mu\_Y}\ar[r]^(.6){\mathrm{Res}}
& H^1(\Omega^{n-2}_Q(n-3))\ar[d]^{\mu\_Q}\\
H^m\big(\Omega^{m+1}_Y(\log Q)\big) \ar[r]\ar[r]^(.6){\mathrm{Res}} & H^m(\Omega^{m}_Q).}
\eeq
In particular we get the nonlinear map
\beq{Yuk}
\Ext^1(\Omega_X,\O_X)\overset{\mu\_Y}{\To} H^{m}\big(\Omega^{m+1}_Y(\log
Q)\big)\Rt{\mathrm{Res}}H^{m,m}(Q).
\eeq

\begin{rmk}
More generally one can define (commutative) rings
\beq{Ry}
R_Y:=\bigoplus _{k=0}^{n}H^k(\Lambda^kT_Y(2k\,Q))\isom \bigoplus_{k=0}^n
H^k(\Omega_Y^{n-k}((2k+2-n)Q))
\eeq
and
\beq{Rq}
R_Q:=\bigoplus _{k=0}^{n-1}H^k(\Lambda^kT_Q(-2k))\isom \bigoplus_{k=0}^{n-1}
H^k(\Omega_Q^{n-1-k}(n-1-2k))
\eeq
with product induced by the wedge product of polyvector fields and cup product
on cohomology. By some easily checked cohomology vanishing on $Q$ one sees
that in the definition \eqref{Rq} of $R_Q$ one can replace $\Lambda^*T_Q$
by $\Lambda^*T_Y\restr Q$ in every degree except $k=n-1$. 
Since the wedge and cup products commute with
restriction to $Q$, we can define a ring homomorphism $R_Y\to R_Q$. (To deal
with
the troublesome degree $n-1$ classes we can pick a complement in $H^{n-1}
(\Omega^1_Y(nQ))$ to the kernel of the composition
\[ H^{n-1}(\Omega^1_Y(nQ))\to H^{n-1}(\Omega^1_Y(nQ)\restr Q)\to H^{n-1}(\Omega^1_Q(-n))\]
and map this to zero in $R_Q$. The degree $n$ part of $R_Y$ is already zero.)
\end{rmk}

The ring structure on $R_Q$ can be determined explicitly and turns out to
be closely related  to the cohomology ring of the quadric.

Recall that a quadric $Q$ of dimension $n-1=2m$ carries two natural classes
of $m$-planes  which we call $A\cong\PP^m$ and $B\cong\PP^m$. For instance,
describing
$Q\subseteq\PP^{2m+1}$ with homogeneous coordinates $x_0,\ldots,x_m,y_0,\ldots
y_m$ as the zero locus of $\sum_{i=0}^mx_iy_i$ then we can take 
\beq{AB} 
A=\{x_0=0=x_1=\ldots=x_m\} \quad\text{and}\quad B=\{y_0=0=x_1=\ldots=x_m\}. 
\eeq 
Together their cohomology classes  (which we also call
$A$ and $B$) generate $H^m(\Omega^m_Q)$, which is the middle degree part
of the ring $R_Q$ \eqref{Rq}.

\begin{prop}\label{R_Q} Let $Q$ be a quadric of dimension $2m=n-1$. There is a generator $\eta$ of $H^1(T_Q(-2))$
such that
$$
R_Q\,=\,\langle\eta,\eta^2,\ldots,\eta^{m-1},A,B,\eta^{m+1},
\ldots,\eta^{n-1}\rangle
$$
with $\eta^m=A-B$ and $\eta(A+B)=0$.
\end{prop}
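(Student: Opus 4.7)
The plan is to compute $R_Q$ via the embedding $Q \hookrightarrow \PP := \PP^{n}$ (with $n = 2m+1$), using the normal bundle sequence together with Bott's formula on $\PP$. Since $N_{Q/\PP} = \O_Q(2)$, twisting the normal bundle sequence by $\O_Q(-2)$ yields
\[
0 \to T_Q(-2) \to T_\PP|_Q(-2) \to \O_Q \to 0.
\]
A short Bott computation using the restricted Euler sequence shows $H^0$ and $H^1$ of the middle term both vanish, so the connecting map is an isomorphism $\C = H^0(\O_Q) \xrightarrow{\sim} H^1(T_Q(-2))$; I take $\eta$ to be the image of $1$.

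Taking $\Lambda^k$ of this sequence and twisting by $\O(-2k)$ produces short exact sequences
\[
0 \to \Lambda^k T_Q(-2k) \to \Lambda^k T_\PP|_Q(-2k) \to \Lambda^{k-1} T_Q(-2(k-1)) \to 0,
\]
whose connecting maps $R_Q^{k-1} \to R_Q^k$ are, by naturality, cup product with $\eta$. Writing $\Lambda^k T_\PP = \Omega^{n-k}_\PP(n+1)$ and chasing Bott on $\PP$ through the restriction sequence, one checks that for $1 \le k \le n-1$ the groups $H^{k-1}$ and $H^k$ of the middle term $\Omega^{n-k}_\PP(n+1-2k)|_Q$ all vanish, except for two unavoidable $\C$-contributions (each coming from a diagonal Bott entry $H^p(\Omega^p_\PP) = \C$): namely $H^m$ of the middle term at both $k = m$ and $k = m+1$ is one-dimensional, generated by a restriction/residue of a hyperplane power class from $\PP$.

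Consequently cup with $\eta$ is an isomorphism $R_Q^{k-1} \cong R_Q^k$ for all $k \neq m, m+1$, which together with $h^{p,p}(Q) = 1 + \delta_{p,m}$ gives $R_Q^k = \C\cdot\eta^k$ for $0 \le k \le m-1$. At the middle we are left with the two exact sequences
\[
0 \to R_Q^{m-1} \xrightarrow{\cup\eta} R_Q^m \xrightarrow{\alpha} \C \to \cdots
\quad\text{and}\quad
\C \xrightarrow{\beta} R_Q^m \xrightarrow{\cup\eta} R_Q^{m+1} \to 0.
\]
In the second, the $\C$ is $H^m(\Omega^m_\PP|_Q)$ generated by $h^m|_Q$ and $\beta$ is the conormal-filtration quotient $\Omega^m_\PP|_Q \twoheadrightarrow \Omega^m_Q$, sending $h^m|_Q \mapsto A+B$; this immediately gives $\eta(A+B) = 0$ and $R_Q^{m+1} = \C\langle A-B\rangle$. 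In the first, rank-nullity ($\dim R_Q^{m-1} = 1$, $\dim R_Q^m = 2$) forces $\alpha$ to be surjective, so $\eta^m$ spans $\ker\alpha$; applying the involution $\sigma$ of $Q$ swapping the two rulings and using naturality of $\alpha$ one obtains $\alpha(A) = \lambda\alpha(B)$ with $\lambda = \pm 1$, and $\lambda = -1$ would give $\ker\alpha = \C\langle A+B\rangle$, hence $\eta^m \in \C\langle A+B\rangle$ and $\eta^{m+1} = 0$, contradicting $R_Q^{m+1} \neq 0$. Thus $\ker\alpha = \C\langle A-B\rangle$, and rescaling $\eta$ by a suitable $m$-th root we obtain $\eta^m = A-B$ on the nose.

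I expect the main obstacle to be making the middle-degree maps concrete: verifying that the unavoidable $\C$-contributions at $(k, q) = (m, m)$ and $(m+1, m)$ really come from restricted hyperplane classes, and that $\beta(h^m|_Q) = A+B$ rather than some other nonzero multiple. Once these identifications are pinned down, the rest of the argument is Bott calculus and rank-nullity bookkeeping.
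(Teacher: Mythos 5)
Your proposal is correct and follows essentially the same route as the paper: define $\eta$ as the extension class of the twisted normal bundle sequence, realise multiplication by $\eta$ as the connecting map in the wedge powers of the (co)normal sequence, and use Bott plus the restriction sequence to isolate the two middle-degree exact sequences. The only divergence is cosmetic: where you identify $\ker\alpha=\C\langle A-B\rangle$ via the ruling-swapping involution and a contradiction, the paper recognises that map as the Gysin homomorphism $H^{m,m}(Q)\to H^{m+1,m+1}(\PP^n)$, whose kernel is the primitive class $A-B$; both arguments are sound.
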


\begin{proof}
The normal bundle sequence, twisted by $\O_Q(-2)$, 
\[ 0\to T_Q(-2)\to T_{\PP^n}(-2)\restr Q\to \O_Q\to 0\]
induces, by Lemma \ref{restrcohom}, an isomorphism $H^0(\O_Q)\isom H^1(T_Q(-2))$.
So we can identify $\eta$ with (a nonzero multiple of) the extension
class of this sequence.

Therefore in the wedge powers of the conormal bundle sequence, twisted by
$\O_Q(n-1-2k)$, 
\[0\to \Omega_Q^{n-2-k}(n-3-2k)\to\Omega_{\PP^n}^{n-1-k}\restr Q(n-1-2k)\to\Omega_Q^{n-1-k}(n-1-2k)\to0,\]
the boundary map 
\[H^k(\Omega_Q^{n-1-k}(n-1-2k))\overset{\eta\lrcorner}{\To} H^{k+1}(\Omega_Q^{n-2-k}(n-3-2k))\]
is given by  contraction with $\eta$. This corresponds to multiplication by $\eta$ in $R_Q$ under the isomorphism $H^1(\Omega^{n-2}_Q(n-3))\isom H^1(T_Q(-2))$.

If  $n-1=2m$ is even then the kernel and cokernel of this map always vanish
by Lemma \ref{restrcohom} unless $k=m-1$ or $k=m$. In these degrees we get exact
sequences
$$
\spreaddiagramcolumns{-.5pc}
\spreaddiagramrows{-1pc}
\xymatrix{
0\ar[r]& H^{m-1}(\Omega^{m+1}_Q(2))\ar[r]^<(.2){\eta\lrcorner}& H^m(\Omega^m_Q)\ar[r]\ar[dr]_\gamma&
H^m(\Omega^{m+1}_{\PP^n}\restr{Q}) \ar[r]\ar@{=}[d]& 0\\
&&& H^{m+1}(\Omega^{m+1}_{\PP^n}),}
$$
\vskip -3mm\noindent and\vspace{-1mm}
$$
\spreaddiagramcolumns{-.5pc}
\spreaddiagramrows{-1pc}
\xymatrix{
& H^m(\Omega^m_{\PP^n}) \ar@{=}[d]\ar[dr]^{\alpha} \\
0 \ar[r]& H^m(\Omega^m_{\PP^n}\restr Q) \ar[r]& H^m(\Omega^m_Q)
\ar[r]^(.35){\eta\lrcorner}& H^{m+1}(\Omega^{m-1}_Q(-2)) \ar[r]& 0.}
$$
Since $\gamma$ is the Gysin homomorphism, with kernel generated
by $A-B$, we can arrange (by rescaling) that $\eta^m=A-B$. The image of $\alpha$
is the restriction of the $m$th power of the Fubini-Study class to $Q$ and
hence equal to $A+B$. Therefore $\eta(A+B)=0$.
\end{proof}

\begin{rmk}
We recognise this description of $R_Q$ as very much like $H^*(Q,\C)$, which
has a generator $\omega$ in degree $1$ and can be described as
\[
R_Q\,=\,\langle\omega,\omega^2,\ldots,\omega^{m-1},A,B,\omega^{m+1},
\ldots,\omega^{n-1}\rangle,
\]
with $\omega^m=A+B$ and $\omega(A-B)=0$ (notice the differences in sign from
Proposition \ref{R_Q}).

This is no coincidence: 
the quadric is  defined by a section of $\O_{\PP^n}(2)$;
differentiating twice along its zero set $Q$ gives a symmetric map
$T_Q\tensor T_Q\to \O_Q(2)$ which is nondegenerate. The resulting isomorphism
$T_Q(-2)\cong\Omega_Q$ induces an identification $R_Q\to\bigoplus_iH^i(\Omega^i_Q)=
H^*(Q, \C)$ which takes $\eta$ to $\omega$.

However in the middle degree $m$ this isomorphism  differs from the
one defined by the pairing $\Omega^m_Q\tensor\Omega^m_Q\to K_Q$ which was
employed in \eqref{Rq}; this explains
how it can interchange the classes $A-B$ and $A+B$. 
\end{rmk}

In the following we tacitly identify the local deformation space at $p_i$ with $\C$ via $\eta$. Recall that $2m+1=n\ge5$.

\begin{thm} \label{biggy} A class $e\in\Ext^1(\Omega_X,\O_X)$ that maps to
$(\epsilon_i)\in\bigoplus_iH^0(\O_{p_i})$ under \eqref{loctoglob}
maps to $\Big(\epsilon_i^{m}(A_i-B_i)\Big)\,\in\,\bigoplus_iH^{n-1}(Q_i)$ under \eqref{Yuk}.

Any such class gives rise to a relation
\beq{reln}
\sum_i\delta_i(A_i-B_i)=0\ \in H_{n-1}(Y),
\eeq
where $\delta_i=\epsilon_i^{m}$. In particular, $e$ is a (first order)  smoothing if and only if all $\delta_i$ are nonzero.
\end{thm}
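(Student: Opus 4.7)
The plan is to combine the machinery of Section \ref{smoohom}: the identification $\Ext^1(\Omega_X,\O_X)\cong H^1\bigl(\Omega^{n-1}_Y(\log Q)(-(n-3)Q)\bigr)$ of Proposition \ref{Rpi1}, the residue description of the global-to-local map \eqref{loctoglob} supplied by Corollary \ref{coho}, the (nonlinear) commutative square \eqref{resdiag}, the ring calculation of Proposition \ref{R_Q}, and a Gysin/Poincar\'e duality argument to convert a vanishing in $H^{m+1,m+1}(Y)$ into the stated relation in $H_{n-1}(Y)$.

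First I would identify the local image of $e$. By Corollary \ref{coho} the residue map applied to $e\in H^1\bigl(\Omega^{n-1}_Y(\log Q)(-(n-3)Q)\bigr)$ is exactly the global-to-local map \eqref{loctoglob}, and under the convention fixed just before the theorem its value at the double point $p_i$ is $\epsilon_i\eta_i\in H^1(\Omega^{n-2}_{Q_i}(n-3))$. Since the vertical map $\mu_Q$ in \eqref{resdiag} is the $m$-fold cup product (precomposed with the tensor power $V\to V^{\otimes m}$) and respects the component decomposition $Q=\coprod_iQ_i$, chasing the diagram yields
\[
\mathrm{Res}\bigl(\mu_Y(e)\bigr)\;=\;\mu_Q\bigl(\mathrm{Res}(e)\bigr)\;=\;\sum_i\epsilon_i^m\,\eta_i^m\;=\;\sum_i\epsilon_i^m(A_i-B_i),
\]
invoking the key relation $\eta_i^m=A_i-B_i$ of Proposition \ref{R_Q}. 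This establishes the first assertion of the theorem.

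To pass from cohomology on $Q$ to homology on $Y$, use the long exact sequence of
\[
0\to\Omega^{m+1}_Y\to\Omega^{m+1}_Y(\log Q)\to\Omega^m_Q\to0.
\]
Since $\mu_Y(e)\in H^m\bigl(\Omega^{m+1}_Y(\log Q)\bigr)$ is a global class, its residue class in $H^{m,m}(Q)$ lies in the kernel of the connecting Gysin map $\gamma\colon H^{m,m}(Q)\to H^{m+1,m+1}(Y)$. Poincar\'e duality on the smooth K\"ahler manifold $Y$ and on its smooth divisor $Q$ identifies $\gamma$ with the pushforward $H_{n-1}(Q)\to H_{n-1}(Y)$ on the cycle classes $[A_i],[B_i]$, so $\sum_i\delta_i(A_i-B_i)=0$ in $H_{n-1}(Y)$ with $\delta_i=\epsilon_i^m$. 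The last clause is immediate: $e$ is a first order smoothing iff it smooths every $p_i$, iff every $\epsilon_i\ne0$, iff every $\delta_i=\epsilon_i^m\ne0$.

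The main piece of bookkeeping that needs careful checking is compatibility of the various normalisations: the generator $\eta_i$ must be simultaneously the one identifying the local deformation space at $p_i$ with $\C$ (so that $\epsilon_i$ is unambiguously defined) and the one realising $\eta_i^m=A_i-B_i$ in Proposition \ref{R_Q}, and the Poincar\'e-dual interchange between $\gamma$ and the pushforward on $H_{n-1}$ must send the cohomology class $A_i-B_i$ to the correspondingly signed cycle class. Modulo these verifications the proof reduces to the single diagram chase through \eqref{resdiag} followed by the residue exact sequence above.
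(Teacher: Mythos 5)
Your proposal is correct and follows essentially the same route as the paper: the first assertion comes from the commutativity of \eqref{resdiag} together with $\eta^m=A-B$ from Proposition \ref{R_Q}, and the homology relation comes from the residue exact sequence $0\to\Omega^{m+1}_Y\to\Omega^{m+1}_Y(\log Q)\to\Omega^m_Q\to0$ and Poincar\'e duality. Your extra care about the normalisation of $\eta_i$ is a sensible check but is already handled by the convention fixed just before the theorem.
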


\begin{proof}
The first statement follows immediately from the commutativity of \eqref{resdiag}
and the calculation $\eta^m=A-B$ of Proposition \ref{R_Q}.
We obtain the second statement by Poincar\'e duality from the cohomology
exact sequence
\[
\ldots\to H^m\big(\Omega^{m+1}_Y(\log Q)\big)
\Rt{\mathrm{Res\,}}H^{m,m}(Q)\to
H^{m+1,m+1}(Y)\to\ldots
\]
induced from
\[
0\to\Omega^{m+1}_Y\to\Omega^{m+1}_Y(\log Q)
\Rt{\mathrm{Res\,}}\Omega^m_Q\to0.
\]
The deformation smooths the double point $p_i$ if and only if its value
$\epsilon_i$ at $p_i$ under the map \eqref{loctoglob} is nonzero, if and
only if $\delta_i\neq 0$.
\end{proof}

To show that the Theorem is not vacuous we exhibit an example where not all $A_i-B_i$ classes are zero in $H_{n-1}(Y)$. In dimension $n$ we let $X$ denote Schoen's nodal Calabi-Yau hypersurface
$$
\{x_0^{n+2}+\ldots x_{n+1}^{n+2}-(n+2)x_0\ldots x_{n+1}=0\}\subset\PP^{n+1}.
$$
This is smooth except for $(n+2)^n$ ODPs at the points $[\zeta^{a_0}:\ldots,\zeta^{a_n}:1]$, where $\zeta=e^{2\pi i/(n+2)}$ and $\sum a_i\equiv 0$ mod $n+2$.

Then by \cite[Proposition 3.4]{Sch}, the $A_i-B_i$ classes span a subspace of dimension $(n+1)!$ in $H_{n-1}(Y)$. Since $X$ can certainly be smoothed to a smooth degree $n+2$ hypersurface of $\PP^{n+1}$, Theorem \ref{biggy} provides some of the $(n+2)^n-(n+1)!$ relations between the $A_i-B_i$ classes. In fact it linearly generates them \emph{all}, as we shall see in the next Section.


\section{Nodal hypersurfaces} \label{hyper} 
In this section we discuss a possible converse to Theorem \ref{biggy}.
Recall that we have the linear map
\beq{I}
\Ext^1(\Omega_X,\O_X)\to \bigoplus_iH^0(\O_{p_i})
\eeq
of \eqref{loctoglob}. Call its image $I\subseteq \bigoplus_iH^0(\O_{p_i})$. This is
the space of local smoothings of the ODPs that can be realised by a global
smoothing.

We also have the linear map taking the $A_i-B_i$ cycles
to their (co)homology classes in $Y$,
\beq{K}
 \bigoplus_iH^0(\O_{p_i})\cong\bigoplus_i\C.(A_i-B_i)\subseteq\bigoplus_i H^{m,m}(Q_i)\to
H^{m+1,m+1}(Y).
\eeq
Call its kernel $K\subseteq  \bigoplus_iH^0(\O_{p_i})$. This is the space of homology
relations amongst the $A_i-B_i$ cycles in $Y$.

Think of $\oplus_i\O_{p_i}$ as a semisimple algebra (by multiplication of
functions independently at each point $p_i$). Then we have the $m$th-power
map
\beq{mth}
(\epsilon_i)_{i=1}^N\mapsto(\epsilon_i^m)_{i=1}^N
\eeq
from $\oplus_i\O_{p_i}$
to itself. Here $N$ is the number of ODPs $p_i$. By Theorem \ref{biggy} this maps $I$ to $K$ (and
its composition with \eqref{I} gives $\mu\_Y$ of \eqref{Yuk}). Friedman's
theorem says that in dimension $n=3$ this map $I\to K$ is a (linear) isomorphism.

Since \eqref{mth} is nonlinear for $n\ne3$ it is not sensible
to ask for it to be an isomorphism. Simple calculations with nodal
hypersurfaces show that in general $\dim K>\dim I$, so we cannot expect it
to be onto. It makes more sense to ask for the following converse.

\begin{quest}
 Does the image of the map \eqref{mth} restricted
to $I$ linearly span $K$\,?
\end{quest}

Using the algebra structure on $\oplus_i\O_{p_i}$
we can talk about polynomials in the elements of $I\subseteq \bigoplus_iH^0(\O_{p_i})$.
Question 1 involves only polynomials of the form $\sum_ja_jx_j^m$ on elements
$x_j=(\epsilon_{j,i})_{i=1}^N$ of $I$. But by some elementary linear algebra
(for any vector space $V$, the symmetric power $S^mV$ is generated by elements
of the form $x^{\otimes m}$ for $x\in V$), Question 1 is equivalent to the
following \emph{a priori} weaker question. 

\begin{quest}
 Do degree $m$ polynomials on 
$I\subseteq \bigoplus_iH^0(\O_{p_i})$ generate $K$\,?  
\end{quest}

Another way of saying this is to consider the linear map
\[
S^mI\to \bigoplus_iH^0(\O_{p_i})
\]
induced from $I\into \bigoplus_iH^0(\O_{p_i})$ by the algebra structure on $\oplus_i\O_{p_i}$.
Then this maps into $K$, and we are asking whether or not it is \emph{onto}
$K$. 
This can also be phrased in terms of the ring homomorphism $R_Y \to R_Q$ introduced in \ref{Rq} and \ref{Ry}: we ask if $K$ is contained in the image of the subring of $R_Y$ generated by $H^1(T_Y(2Q))\isom \Ext^1(\Omega_X, \O_X)$.

In particular if the answer to Question 2 were true then, by the formulation
in Question 1, a homology relation
$$
\sum_i\delta_i(A_i-B_i)=0\in H_m(Y)
$$
with each $\delta_i\ne0$ would imply the existence of a deformation of $X$
which smooths \emph{every} ODP $p_i$. \medskip

We study this problem for nodal Calabi-Yau hypersurfaces
of projective space using Schoen's extension of the Griffiths-Dwork method
\cite{Sch}.

Fix an anticanonical (i.e. degree $n+2$) divisor $X\subset\PP^{n+1}$.
Suppose $X$ has only ODPs, and let $Z=\bigcup_i\{p_i\}$ be the nodal
set. As before let $Y$ denote the blow up of $X$ in $Z$ with exceptional
divisor $Q=\bigcup_iQ_i$.

Applying $\Hom(\ \cdot\ ,\O_X)$ to $0\to\O_X(-X)\to
\Omega_{\PP^{n+1}}\restr X\to\Omega_X\to0$ gives the exact sequence
$$
0\to H^0(T_{\PP^{n+1}}\restr X)\to H^0(\O_X(X))\to\Ext^1(\Omega_X,\O_X)\to0.
$$
The middle group is $H^0(\O_{\PP^{n+1}}(n+2))/\langle f\rangle$, where $f\in
H^0(\O_{\PP^{n+1}}(n+2))$ is the section defining $X$. Adding elements of
this group to $f$ gives a surjection to the first order deformations $\Ext^1(\Omega_X,\O_X)$
of $X$. The kernel $H^0(T_{\PP^{n+1}}\restr X)$ describes the infinitesimal
action of automorphisms
of $\PP^n$. The first arrow differentiates $f$ down a vector field
in $H^0(T_{\PP^{n+1}}\restr X)$, but $f$ vanishes to second order on $Z$,
so its image in fact lies in $H^0(\O_X(X)\otimes\I_Z)\subset H^0(\O_X(X))$.
This is easily seen to be precisely the subspace of deformations of the divisor
$X$ which preserve the ODPs $p_i$ to first order (i.e. they may move them,
but not smooth them). Therefore we get a commutative diagram
\beq{hyperdefs} \vspace{-7mm} \eeq
{\small $$
\spreaddiagramcolumns{-1pc}
\spreaddiagramrows{-1pc}
\xymatrix{
& 0 \ar[d]& 0 \ar[d] \\
& H^0(T_{\PP^{n+1}}\restr X) \ar@{=}[r]\ar[d]& H^0(T_{\PP^{n+1}}\restr X)
\ar[d] \\
0 \ar[r] & H^0(\O_X(X)\otimes\I_Z) \ar[d]\ar[r]& H^0(\O_X(X)) \ar[r]\ar[d]&
\bigoplus_iH^0(\O_{p_i}) \ar[r]\ar@{=}[d]& H^1(\O_X(X)\otimes\I_Z) \ar[d] \\
0 \ar[r] & H^0(T_X) \ar[r]\ar[d]& \Ext^1(\Omega_X, \O_X) \ar[r]\ar[d]&
\bigoplus_iH^0(\O_{p_i}) \ar[r]& H^2(T_X) \\
& 0 & 0
}$$}

Schoen's generalisation \cite{Sch} of the Griffiths-Dwork method
of generating primitive cohomology of hypersurfaces via residues gives, with
a little work, the following commutative diagram.
\beq{schoen}
\spreaddiagramcolumns{-1pc}
\spreaddiagramrows{-1pc}
\xymatrix{
0 \ar[r] & H^0(\I_Z(mX)) \ar[d]\ar[r]& H^0(\O_X(mX))
\ar[r]\ar[d]& \bigoplus_iH^0(\O_{p_i}) \ar[r]\ar@{=}[d]& H^1(\I_Z(mX))
\ar@{=}[d] \\
0 \ar[r] & H^m(\Omega^{m+1}_Y) \ar[r]\ar[d]& H^m(\Omega^{m+1}_Y(\log
Q)) \ar[r]\ar[d]& \langle A_i-B_i\rangle_Q \ar[r]& \langle A_i-B_i\rangle_Y
\\ & 0 & 0}
\eeq
Here the cohomology groups
in the upper row are calculated on $X$.
(To make contact with \cite{Sch} one should, for instance, write $H^0(\O_X(mX))$
as $H^0(K_{\PP^{n+1}}((m+1)X))\big/H^0(K_{\PP^{n+1}}(mX))$
and take residues on $X$ to get the vertical maps.)
The penultimate
term on the bottom row is the span of
the $A_i-B_i$ classes in $H^m(\Omega^m_Q)=
\bigoplus_i\langle A_i,B_i\rangle$. The
final term is their span after push forward to $H^{m+1}(\Omega^{m+1}_Y)$.

In this case $I$, as defined in \eqref{I}, is the image
of $H^0(\O_X(X))$ in the lower $\bigoplus_iH^0(\O_{p_i})$ in the diagram \eqref{hyperdefs}.
And $K$ \eqref{K} is, by the exactness of the diagram \eqref{schoen}, the
image of
$H^0(\O_X(mX))$ in $\langle A_i-B_i\rangle_Q$. The map from $I$ to $K$ factors
through the following commutative diagram
$$
\xymatrix{H^0(\O_X(X)) \ar[r]\ar[d] & H^0(\O_X(mX)) \ar[d] \\
\bigoplus_iH^0(\O_{p_i}) \ar[r] & \bigoplus_i\C(A_i-B_i).}
$$
The vertical maps are those coming from the diagrams (\ref{hyperdefs}, \ref{schoen}),
while both horizontal maps take the $m$th power. However, for hypersurfaces
$X$ of projective space, the map
$$
S^m H^0(\O_X(X))\to H^0(\O_X(mX))
$$
is onto. It follows that Question 2 can be answered positively for such $X$,
and therefore the converse (Question 1) is true in this case.

\medskip
That the converse is true for hypersurfaces is the nodal analogue of the
fact that for smooth Calabi-Yau hypersurfaces of projective
space, $H^1(T_Y)$ generates $H^m(\Lambda^mT_X)\cong H^{m+1,m}(X)$. (This
is most easily proved by the Griffiths-Dwork method.) For general smooth
Calabi-Yau manifolds it is dual to the condition that on any mirror
manifold $Y$, $H^2(Y,\C)$ should generate $H^{2*}(Y,\C)$ under the quantum
cohomology product. It also seems that it is probably
not true in general (as Lev Borisov explained to us) even for a Calabi-Yau
hypersurface in a toric variety $\PP_\Delta$, for instance when some lattice
points in the multiples of the reflexive
polytope $\Delta$ are not sums of lattice points in $\Delta$. Therefore
we think
it unlikely that the answer to Questions 1 or 2 is positive in general.

However, it is still possible that a weaker condition might hold; namely
one could ask whether for deformations of any given \emph{smooth} Calabi-Yau
manifold $X$,

\begin{quest}
\text{Does the image of the period map span $H^n(X,\C)$ ?}
\end{quest}

\noindent It seems not to be known whether one should expect this in general or not.

If this were true then a finite number of the derivatives
of the
period point at any given $(X,\Omega)$ would generate $H^n(X,\C)$.
In this case we would expect that also for \emph{nodal} $X$, some high order
deformation
of the complex structure would produce a holomorphic $n$-form with a nonzero
coefficient of the dual of the class $\Delta$. Its pullback to a
topological model of the smoothing would then contain PD$(\widetilde\Delta)$
and so would have nonzero integral against the vanishing cycles. It would
thus correspond to a genuine smoothing of $X$.

So if the answer to Question 3 is positive for \emph{all smooth Calabi-Yau manifolds} then we
still expect a weak converse for nodal $X$. That is,
a homology relation as in Theorem \ref{biggy}, with all $\delta_i\ne0$, would
imply the existence of a smoothing.

\appendix
\section{Cohomology calculations} \label{app}

In this appendix we compute some cohomology groups of twists of sheaves
of holomorphic forms on a quadric hypersurface $Q\subset \PP^n$. For simplicity
we will always assume that $n\geq 5$. 

\begin{prop}\label{cohomQ} The cohomology groups $H^{\le2}(\Omega_Q^{n-1}(j))$
and $H^{\le2}(\Omega_Q^{n-2}(j))$ vanish for all $j\le n-2$ except
$H^1(\Omega_Q^{n-2}(n-3))\cong\C$.
\end{prop}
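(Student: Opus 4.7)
The plan is to express both families in terms of cohomology of line bundles and of the tangent bundle on the smooth quadric $Q\subset\PP^n$ of dimension $n-1$, then compute these via the standard Euler and tangent-bundle short exact sequences on $Q$.

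First, by adjunction $K_Q=\O_Q(-n+1)$, so $\Omega^{n-1}_Q(j)=\O_Q(j-n+1)$, and the perfect pairing $\Omega^{n-2}_Q\otimes\Omega^1_Q\to K_Q$ identifies $\Omega^{n-2}_Q(j)\cong T_Q(j-n+1)$. Writing $k:=j-n+1$, the proposition reduces, for $j\le n-2$ (so $k\le-1$), to showing
\[
H^{\le 2}(\O_Q(k))=0\quad\text{and}\quad H^{\le 2}(T_Q(k))=0
\]
for all $k\le-1$, with the single exception $H^1(T_Q(-2))\cong\C$ (corresponding to $j=n-3$).

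For the line bundles I would use the Koszul sequence $0\to\O_{\PP^n}(k-2)\to\O_{\PP^n}(k)\to\O_Q(k)\to 0$ together with Bott vanishing on $\PP^n$; since $n\ge 5$ this kills $H^i(\O_{\PP^n}(d))$ for $i=2,3$ and all relevant $d$, immediately giving $H^{\le 2}(\O_Q(k))=0$ for $k\le-1$. For the tangent bundle I would twist the tangent sequence of $Q\subset\PP^n$,
\[
0\to T_Q(k)\to T_{\PP^n}|_Q(k)\to\O_Q(k+2)\to 0,
\]
and the restriction of the Euler sequence,
\[
0\to\O_Q(k)\to\O_Q(k+1)^{\oplus(n+1)}\to T_{\PP^n}|_Q(k)\to 0.
\]
The Euler sequence combined with the line-bundle vanishing yields $H^{\le 2}(T_{\PP^n}|_Q(k))=0$ for every $k\le-1$; then the tangent sequence gives $H^{\le 2}(T_Q(k))=0$ except possibly where the connecting map from $H^0(\O_Q(k+2))$ is nonzero, which can only happen at $k=-1$ and $k=-2$.

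The case $k=-2$ produces the asserted exceptional group: the tangent sequence reads $0\to T_Q(-2)\to T_{\PP^n}|_Q(-2)\to\O_Q\to 0$, and since $H^0(T_{\PP^n}|_Q(-2))=H^1(T_{\PP^n}|_Q(-2))=0$ the connecting map $\C=H^0(\O_Q)\to H^1(T_Q(-2))$ is an isomorphism, accounting for the one non-vanishing group. The main subtlety is the boundary case $k=-1$, where $H^0(\O_Q(1))=\C^{n+1}$ is nonzero and one must verify that the map $H^0(T_{\PP^n}|_Q(-1))\to H^0(\O_Q(1))$ (given by the gradient of the quadric equation, and an isomorphism precisely because $Q$ is smooth) kills the potential contribution, so that $H^1(T_Q(-1))=0$. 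Once this check is in place, all remaining vanishings follow by routine diagram chases.
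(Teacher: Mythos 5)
Your argument is, up to a twist by $K_Q$, the same as the paper's. The paper's key exact sequence $0\to\Omega^{n-2}_Q(j-2)\to\Omega^{n-1}_{\PP^n}(j)\restr{Q}\to\Omega^{n-1}_Q(j)\to0$ is exactly your twisted tangent sequence $0\to T_Q(k)\to T_{\PP^n}\restr{Q}(k)\to\O_Q(k+2)\to0$ with $k=j-n-1$; both proofs identify the critical map at $k=-1$ (the paper's $j=n$) with the polar isomorphism $\C^{n+1}\to(\C^{n+1})^*$ of the nondegenerate quadratic form, and both produce the single exceptional class from $H^0(\O_Q)\isom H^1(T_Q(-2))$. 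The only cosmetic difference is that you feed in line-bundle cohomology through the restricted Euler sequence where the paper invokes Bott's formula for $\Omega^p_{\PP^n}(m)$ and a restriction lemma.

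One sentence of your write-up is false as stated, and it contradicts what you correctly say two sentences later: $H^{\le 2}(T_{\PP^n}\restr{Q}(k))$ does \emph{not} vanish for every $k\le-1$, because the restricted Euler sequence $0\to\O_Q(-1)\to\O_Q^{\oplus(n+1)}\to T_{\PP^n}\restr{Q}(-1)\to0$ gives $H^0(T_{\PP^n}\restr{Q}(-1))\isom\C^{n+1}$. If that group really were zero, the connecting map $H^0(\O_Q(1))\to H^1(T_Q(-1))$ would be injective and you would conclude $H^1(T_Q(-1))\isom\C^{n+1}\neq 0$, contradicting the Proposition. The blanket vanishing should be asserted only for $k\le-2$ (or for $q\ge1$), with $k=-1$ handled exactly as you in fact do afterwards: the map $H^0(T_{\PP^n}\restr{Q}(-1))\to H^0(\O_Q(1))$ is the isomorphism given by differentiating the defining quadric, so $H^0(T_Q(-1))$ and $H^1(T_Q(-1))$ both vanish. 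With that one correction the argument is complete and matches the paper's.
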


The proof consists mainly in book-keeping in the long exact cohomology sequences
of both
\begin{equation}\label{restriction}
0 \to  \Omega_{\PP^n}^k(j-2) \to \Omega_{\PP^n}^k(j) \to  \Omega_{\PP^n}^{k}(j)
\restr{Q} \to 0
\end{equation}
and the (twisted) wedge powers of the conormal bundle sequence,
\begin{equation}\label{conormal}
0\to\Omega_Q^{k-1}(j-2)\to \Omega_{\PP^n}^{k}(j)\restr{Q}\to\Omega^k_Q(j)\to0.
\end{equation}

First of all we need to recall Bott's formula for the dimension of the various cohomology groups on projective space:
\begin{thm}[\cite{Bott}] Let $0\leq p\leq n$ and $m\in\Z$. Then   $h^q(\PP^n, \Omega_{\PP^n}^p(m))$ vanishes for all $q$ apart from  $h^p(\Omega_{\PP^n}^p)=1$ and 
\begin{gather*}
h^0(\PP^n, \Omega_{\PP^n}^p(m))=
\binomial{m-1}{p}\binomial{m+n-p}{m}=h^n(\PP^n, \Omega_{\PP^n}^{n-p}(-m))
\end{gather*}
for  $p<m$.
%
\end{thm}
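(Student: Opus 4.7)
The plan is to prove Bott's formula by induction on $p$, using the twisted $p$-th exterior power of the Euler sequence. For the base case $p=0$ we have $\Omega^0_{\PP^n}\isom\O_{\PP^n}$, so the statement reduces to the classical computation of cohomology of line bundles on projective space, which one can do via \v{C}ech cohomology on the standard affine cover: $H^0(\O(m))$ has dimension $\binomial{m+n}{n}$ for $m\geq 0$ and vanishes otherwise, $H^n(\O(m))$ has dimension $\binomial{-m-1}{n}$ for $m\leq -n-1$ and vanishes otherwise, and all intermediate cohomology vanishes. A direct manipulation of binomial coefficients confirms that this matches the stated formula at $p=0$.

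For the inductive step I would take the $p$-th wedge of the Euler sequence $0\to\Omega^1_{\PP^n}\to\O_{\PP^n}(-1)^{\oplus(n+1)}\to\O_{\PP^n}\to 0$, twisted by $\O(m)$, to obtain
$$0\to\Omega^p_{\PP^n}(m)\to\O_{\PP^n}(m-p)^{\oplus\binomial{n+1}{p}}\to\Omega^{p-1}_{\PP^n}(m)\to 0,$$
and analyse the associated long exact cohomology sequence. In the intermediate range $2\leq q\leq n-1$ both $H^{q-1}(\O(m-p))$ and $H^q(\O(m-p))$ vanish, yielding connecting isomorphisms $H^{q-1}(\Omega^{p-1}(m))\isom H^q(\Omega^p(m))$; by induction these propagate all vanishings and take the sole nontrivial diagonal class $h^{p-1}(\Omega^{p-1})=1$ at $m=0$ to $h^p(\Omega^p)=1$ at $m=0$. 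The extremal degrees $q=0,1$ and $q=n-1,n$ are handled in two ranges: for $m>p$ the middle term of the sequence contributes only in degree $0$, and the resulting four-term exact sequence, combined with the inductive formula for $h^0(\Omega^{p-1}(m))$ and surjectivity of the Euler map at the level of sections, produces the stated dimension for $h^0(\Omega^p(m))$ as well as the vanishing $h^1(\Omega^p(m))=0$. For $m\leq p-n-1$ one invokes Serre duality on $\PP^n$, $H^q(\Omega^p_{\PP^n}(m))^{\vee}\isom H^{n-q}(\Omega^{n-p}_{\PP^n}(-m))$, which reduces to the first range and establishes the stated identity $h^0(\Omega^p(m))=h^n(\Omega^{n-p}(-m))$. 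The middle range $p-n\leq m\leq p$ (with $m\neq 0$, $q\neq p$) is handled by noting that both ends of the four-term sequence vanish by induction, forcing total vanishing.

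The main obstacle will be the bookkeeping at the endpoints of the long exact sequence. Producing the advertised formula from the inductive count requires a Vandermonde-type binomial identity, roughly of the shape
$$\binomial{n+1}{p}\binomial{m+n-p}{n}-\binomial{m-1}{p-1}\binomial{m+n-p+1}{n}=\binomial{m-1}{p}\binomial{m+n-p}{n-p},$$
while the vanishing $h^1(\Omega^p(m))=0$ for $m>p$ depends on surjectivity of the natural map $H^0(\O(m-p))^{\oplus\binomial{n+1}{p}}\to H^0(\Omega^{p-1}(m))$, which can be checked by writing down the $p$-th wedge of the Euler sequence on standard affine charts and exhibiting an explicit local splitting. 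Once these two ingredients are in place, the intermediate-degree vanishing propagates automatically from the long exact sequence, and the induction closes.
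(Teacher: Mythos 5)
The paper offers no proof of this statement at all: it is quoted as a citation to Bott, whose original argument runs through the representation theory of homogeneous vector bundles (what is now packaged as Borel--Weil--Bott). Your induction on $p$ via the twisted exterior powers of the Euler sequence
\[
0\to\Omega^p_{\PP^n}(m)\to\O_{\PP^n}(m-p)^{\oplus\binomial{n+1}{p}}\to\Omega^{p-1}_{\PP^n}(m)\to 0
\]
is therefore a genuinely different, much more elementary and self-contained route; it is the standard textbook proof, needing only the \v{C}ech computation for line bundles on $\PP^n$ and Serre duality. What Bott's method buys in exchange is uniformity (it computes the cohomology of all irreducible homogeneous bundles on any $G/P$ at once), whereas the Euler-sequence induction is special to projective space -- which is all that is needed here.

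Two points in your sketch need repair before the induction closes. First, the surjectivity of $H^0(\O(m-p))^{\oplus\binomial{n+1}{p}}\to H^0(\Omega^{p-1}(m))$ for $m>p$ (equivalently $h^1(\Omega^p(m))=0$) cannot be obtained from ``an explicit local splitting'': every short exact sequence of vector bundles splits locally, and that says nothing about surjectivity on global sections. The correct ingredient is that this map on global sections is contraction with the Euler vector field, so both the surjectivity and the injectivity needed at the boundary value $m=p$ -- where the middle term $H^0(\O)$ does \emph{not} vanish, so your ``both ends vanish by induction'' argument for the middle range does not apply -- are exactly the exactness of the graded Koszul complex of the regular sequence $(x_0,\dots,x_n)$ in $\C[x_0,\dots,x_n]$. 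Second, your binomial identity has an index slip: since $h^0(\Omega^{p-1}(m))=\binomial{m-1}{p-1}\binomial{m+n-p+1}{m}$ and $\binomial{m+n-p+1}{m}=\binomial{m+n-p+1}{n-p+1}$, the identity to verify is
\[
\binomial{n+1}{p}\binomial{m+n-p}{n}-\binomial{m-1}{p-1}\binomial{m+n-p+1}{n-p+1}=\binomial{m-1}{p}\binomial{m+n-p}{n-p},
\]
not the version with $\binomial{m+n-p+1}{n}$ that you wrote. With these corrections (and noting that for $p=1$ and $p=n$ the class $h^p(\Omega^p)=1$ must come out of the extremal-degree analysis rather than the intermediate-range connecting isomorphisms), the argument is sound.
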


\begin{lem}\label{restrcohom}
 The cohomology of $\Omega^p_{\PP^n}(j)\restr Q$ is
\begin{itemize}
 \item[$j<0$:] $h^q(\Omega^p_{\PP^n}(j)\restr Q)= 0$ unless $q=n-1$ and $j<p+2-n$.
\item[$j=0$:]
$h^q(\Omega^p_{\PP^n}(0)\restr Q)=
\begin{cases}
1 & 0 \leq p=q\leq n-1\\
0 &  \text{otherwise}
\end{cases}$.
 \item[$j=1$:] $h^q(\Omega^p_{\PP^n}(1)\restr Q)=
\begin{cases}
n+1 & q=n-1, p=n \text{ or } q=p=1\\
0 &  \text{otherwise}
\end{cases}$.
\item[$j=2$:]
$h^q(\Omega^p_{\PP^n}(2)\restr Q)=0 $ unless $p=q=0$ or $0\leq q=p-1\leq n-1$.
 \item[$j>2$:] $h^q(\Omega^p_{\PP^n}(j)\restr Q)=0$ unless $ q=0$ and $j>p$.
\end{itemize}

\end{lem}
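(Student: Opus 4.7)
The plan is to deduce all five cases directly from the long exact cohomology sequence of the restriction short exact sequence \eqref{restriction},
\[
0 \to \Omega^p_{\PP^n}(j-2) \to \Omega^p_{\PP^n}(j) \to \Omega^p_{\PP^n}(j)\restr Q \to 0,
\]
using Bott's theorem to compute the cohomology of the two outer terms on $\PP^n$. Nothing beyond Bott's formula, the exactness of the sequence, and the standing hypothesis $n \geq 5$ is needed.

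First I would reorganise Bott's formula into a compact ``chamber'' picture: $H^q(\PP^n, \Omega^p(m))$ vanishes except in three regions, namely (i) $q = p$ and $m = 0$ (giving $\C$); (ii) $q = 0$ and $m \geq p+1$; and (iii) $q = n$ and $m \leq p - n - 1$. For any given $(p, j)$ the two twists $m = j-2$ and $m = j$ each sit in a definite region (possibly the vanishing region), and the long exact sequence then reads off $H^q(\Omega^p_{\PP^n}(j)\restr Q)$ immediately.

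The case analysis then proceeds mechanically. For $j > 2$, both twists lie in chamber (ii); higher cohomology of both outer terms vanishes, and $H^0$ of the restriction is the quotient $H^0(\Omega^p(j)) / H^0(\Omega^p(j-2))$, which is nonzero precisely when $j > p$. For $j = 2$, the twist $j - 2 = 0$ triggers chamber (i), so $H^p(\Omega^p) = \C$ transfers via the boundary map to $H^{p-1}$ of the restriction; combined with a chamber-(ii) contribution when $p \leq 1$, this yields the stated description. For $j = 1$ and $j = 0$ I would tabulate the (very few) pairs $(p, q)$ for which one of the twists can touch chambers (i) or (ii); the only surviving contributions are the Serre-dual endpoint $p = n$ and the low-$p$ ends. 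For $j < 0$, only chamber (iii) can contribute, via the boundary $H^n(\Omega^p(j-2)) \to H^{n-1}(\Omega^p(j)\restr Q)$; this is an isomorphism precisely when $j - 2 \leq p - n - 1$ and $H^{n-1}(\Omega^p(j))$ vanishes, yielding the stated range $j < p + 2 - n$.

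The main obstacle is not any conceptual difficulty but bookkeeping: for each of the five ranges of $j$ one must verify that no two chambers collide (which is where the hypothesis $n \geq 5$ is used, to rule out accidental overlaps between chambers (ii) and (iii) or between (i) and either of the others at the extremes $p = 0, n$) and that the connecting maps are isomorphisms or zero in the relevant range. Once the chamber picture is drawn, the conclusion in each case is read off the long exact sequence without further computation.
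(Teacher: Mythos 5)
Your proposal is correct and is essentially the paper's own argument: both deduce every case from the long exact cohomology sequence of the restriction sequence \eqref{restriction}, using Bott's formula to locate the (three disjoint) nonvanishing ranges of the outer terms and then reading off $H^q(\Omega^p_{\PP^n}(j)\restr Q)$ case by case. The paper writes out only the $j>2$ case explicitly and leaves the remaining cases to exactly the bookkeeping you describe.
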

\begin{proof} All statements follow pretty directly from the long exact sequence associated to the restriction sequence \eqref{restriction}, so we will give the details only for the last assertion.

If $j>2$ then neither $\Omega^p_{\PP^n}(j)$ nor $\Omega^p_{\PP^n}(j-2)$ can have higher cohomology and the only nonzero part of the long exact sequence is
\[ 0\to H^0(\PP^n,\Omega^p_{\PP^n}(j-2))\to H^0(\PP^n,\Omega^p_{\PP^n}(j)) \to H^0(\PP^n,\Omega^p_{\PP^n}(j)\restr Q)\to 0,\]
which immediately implies the assertion.\end{proof}

\emph{Proof of Proposition (\ref{cohomQ})}.
The $j=0,p<0$ part
of the preceding Lemma \ref{restrcohom} gives the required cohomology
vanishing
for the twists of $\Omega^{n-1}_Q$, since $\O_{\PP^n}\restr{Q}=\O_Q\isom
\Omega^{n-1}_Q(n-1)$.
%

For the twists of $\Omega^{n-2}_Q$ we use the sequences \eqref{restriction}
and \eqref{conormal} with $k=n-1,n-2$. Using the assumption that $n\geq 5$
we see that 
\[H^{1}(\Omega^{n-1}_{\PP^n}(j)\restr Q)=H^{1}(\Omega^{n-1}_Q(j))=H^{2}(\Omega^{n-1}_{\PP^n}
(j)\restr Q)=0.\]
Thus also $H^2(\Omega_Q^{n-2}(j-2))=0$ and the relevant terms of the long
exact sequence are
\begin{multline*}
0\to H^0(\Omega_Q^{n-2}(j-2))\to H^0(\Omega_{\PP^n}^{n-1}(j)\restr Q) \to
H^0(\Omega_{Q}^{n-1}(j)) \\ \to H^1(\Omega_Q^{n-2}(j-2))\to 0.
\end{multline*}
The central two cohomology groups vanish for $j\leq n-2$ so we only have
to analyse the cases $j=n-1, n$. If $j=n-1$ then still $H^0(\Omega_{\PP^n}^{n-1}(j)\restr Q)=0$ so $H^0(\Omega_Q^{n-2}(n-3))=0$ and  
\[H^1(\Omega_Q^{n-2}(n-3))\isom H^0(\Omega_{Q}^{n-1}(n-1))\isom H^0(\O_Q)\isom
\C.\]
If $j=n$ then $H^0(\Omega_{\PP^n}^{n-1}(n)\restr Q) \to H^0(\Omega_{Q}^{n-1}(n))$
is the map $H^0(T_{\PP^n}(-1))\to H^0(\O_Q(1))$ which differentiates the
quadratic form $Q$ (thought of as the section of $\O(2)$ defining the quadric
$Q$) down vector fields in $\PP^n$. This is nothing but the isomorphism $\C^{n+1}
\to(\C^{n+1})^*$ induced by the nondegenerate quadratic form $Q$ on $\C^{n+1}$.
So $H^0(\Omega_Q^{n-2}(n-2))=H^1(\Omega_Q^{n-2}(n-2))=0$.\qed

\end{document}